\documentclass[a4paper,twoside]{amsart}
\usepackage{a4}
\usepackage[normalem]{ulem}
\usepackage{amssymb}
\usepackage{amsmath}
\usepackage{upref}
\usepackage{bbm}
\usepackage[active]{srcltx}
\usepackage{enumerate}
\usepackage[dvipsnames]{color}
\usepackage{mathtools}

%
%
%
%
\definecolor{blau}{rgb}{0.1,0.0,0.9}

%
%
%
%

\newcounter{komcounter}
\numberwithin{komcounter}{section}

%
%
%
%
%
\makeatletter
\pagestyle{headings}
\headheight 10pt
\def\sectionmark#1{} 
\def\subsectionmark#1{}
\newcommand{\sectnr}{\ifnum \c@secnumdepth >\z@
                 \thesection.\hskip 1em\relax \fi}
\def\tableofcontents{\section*{Contents} 
 \@starttoc{toc}}
\makeatother
%
%
\makeatletter
\def\@biblabel#1{#1.}
\makeatother
%
%
%
\makeatletter
\let\Thebibliography=\thebibliography
\renewcommand{\thebibliography}[1]{\def\@mkboth##1##2{}\Thebibliography{#1}
\addcontentsline{toc}{section}{References}
\frenchspacing 
\setlength{\@topsep}{0pt}
\setlength{\itemsep}{0pt}%
\setlength{\parskip}{0pt plus 2pt}%
}
\makeatother
%
%
\makeatletter
\def\mdots@{\mathinner.\nonscript\!.%
 \ifx\next,.\else\ifx\next;.\else\ifx\next..\else
 \nonscript\!\mathinner.\fi\fi\fi}
\let\ldots\mdots@
\let\cdots\mdots@
\let\dotso\mdots@
\let\dotsb\mdots@
\let\dotsm\mdots@
\let\dotsc\mdots@
\def\vdots{\vbox{\baselineskip2.8\p@ \lineskiplimit\z@
    \kern6\p@\hbox{.}\hbox{.}\hbox{.}\kern3\p@}}
\def\ddots{\mathinner{\mkern1mu\raise8.6\p@\vbox{\kern7\p@\hbox{.}}%
    \raise5.8\p@\hbox{.}\raise3\p@\hbox{.}\mkern1mu}}
\makeatother
%
%
\makeatletter
\let\Enumerate=\enumerate
\renewcommand{\enumerate}{\Enumerate%
\setlength{\@topsep}{0pt}
\setlength{\itemsep}{0pt}%
\setlength{\parskip}{0pt plus 1pt}%
\renewcommand{\theenumi}{\textup{(\alph{enumi})}}%
\renewcommand{\labelenumi}{\theenumi}%
}
\let\endEnumerate=\endenumerate
\renewcommand{\endenumerate}{\endEnumerate\unskip}
\makeatother
%
%
\makeatletter
\def\@seccntformat#1{\csname the#1\endcsname.\quad}
\makeatother
%
%

%
%

\newcommand{\art}[6]{{\sc #1, \rm #2, \it #3 \bf #4 \rm (#5), \mbox{#6}.}}

\newcommand{\AND}{{\rm and }}
%
%
\RequirePackage{amsthm}
\newtheoremstyle{descriptive}%
  {\topsep}   
  {\topsep}   
  {\rmfamily} 
  {}          
  {\bfseries} 
  {.}         
  { }         
  {}          
\newtheoremstyle{propositional}%
  {\topsep}   
  {\topsep}   
  {\itshape}  
  {}          
  {\bfseries} 
  {.}         
  { }         
  {}          
\newtheoremstyle{remarkstyle}%
  {\topsep}   
  {\topsep}   
  {\rmfamily}  
  {}          
  {\itshape} 
  {.}         
  { }         
  {}          
\theoremstyle{propositional}
\newtheorem{thm}{Theorem}[section]
\newtheorem{prop}[thm]{Proposition}
\newtheorem{lem}[thm]{Lemma}

\theoremstyle{descriptive}
\newtheorem{deff}[thm]{Definition}


\newtheorem{remark}[thm]{Remark}

%
%
%
%
%
\makeatletter
\renewenvironment{proof}[1][\proofname]{\par
  \pushQED{\qed}%
  \normalfont
  \trivlist
  \item[\hskip\labelsep
        \itshape
    #1\@addpunct{.}]\ignorespaces
}{%
  \popQED\endtrivlist\@endpefalse
}
\makeatother
%
%
%
%
%
%
%
%
\newdimen\extrawidth
\def\iintlim#1#2{\setbox0\hbox{$\scriptstyle#1$}%
        \setbox1\hbox{$\scriptstyle#2$}%
        \extrawidth=\wd1 \advance\extrawidth-\wd0
        \ifdim\extrawidth<0pt \extrawidth=0pt\fi%
        \int_{#1\kern\extrawidth \kern .5em}^{#2\kern -\wd1} \kern -.5em%
}
%
%

%
%

\renewcommand{\emptyset}{\varnothing}
%
%
%
%
%
\def\vint{\mathop{\mathchoice%
          {\setbox0\hbox{$\displaystyle\intop$}\kern 0.22\wd0%
           \vcenter{\hrule width 0.6\wd0}\kern -0.82\wd0}%
          {\setbox0\hbox{$\textstyle\intop$}\kern 0.2\wd0%
           \vcenter{\hrule width 0.6\wd0}\kern -0.8\wd0}%
          {\setbox0\hbox{$\scriptstyle\intop$}\kern 0.2\wd0%
           \vcenter{\hrule width 0.6\wd0}\kern -0.8\wd0}%
          {\setbox0\hbox{$\scriptscriptstyle\intop$}\kern 0.2\wd0%
           \vcenter{\hrule width 0.6\wd0}\kern -0.8\wd0}}%
          \mathopen{}\int}
\def\vintslides{\mathop{\mathchoice%
          {\setbox0\hbox{$\displaystyle\intop$}\kern 0.22\wd0%
           \vcenter{\hrule height 0.04em width 0.6\wd0}\kern -0.82\wd0}%
          {\setbox0\hbox{$\textstyle\intop$}\kern 0.2\wd0%
           \vcenter{\hrule height 0.04em width 0.6\wd0}\kern -0.8\wd0}%
          {\setbox0\hbox{$\scriptstyle\intop$}\kern 0.2\wd0%
           \vcenter{\hrule height 0.04em width 0.6\wd0}\kern -0.8\wd0}%
          {\setbox0\hbox{$\scriptscriptstyle\intop$}\kern 0.2\wd0%
           \vcenter{\hrule height 0.04em width 0.6\wd0}\kern -0.8\wd0}}%
          \mathopen{}\int}
%
%
\def\Xint#1{\mathchoice
{\XXint\displaystyle\textstyle{#1}}%
{\XXint\textstyle\scriptstyle{#1}}%
{\XXint\scriptstyle\scriptscriptstyle{#1}}%
{\XXint\scriptscriptstyle\scriptscriptstyle{#1}}%
\!\int}
\def\XXint#1#2#3{{\setbox0=\hbox{$#1{#2#3}{\int}$ }
\vcenter{\hbox{$#2#3$ }}\kern-.6\wd0}}

\def\dashint{\Xint-}
\def\avint{\Xint-}

\def\XXsum#1#2#3{{\setbox0=\hbox{$#1{#2#3}{\sum}$ }
\vcenter{\hbox{$#2#3$ }}\kern-.5\wd0}}

\DeclareMathOperator{\grad}{grad}

\DeclareMathOperator{\Lip}{Lip}


\newcommand{\tf}{\tilde{f}}
{\catcode`p =12 \catcode`t =12 \gdef\eeaa#1pt{#1}}      
\def\accentadjtext#1{\setbox0\hbox{$#1$}\kern   
                \expandafter\eeaa\the\fontdimen1\textfont1 \ht0 }
\def\accentadjscript#1{\setbox0\hbox{$#1$}\kern 
                \expandafter\eeaa\the\fontdimen1\scriptfont1 \ht0 }
\def\accentadjscriptscript#1{\setbox0\hbox{$#1$}\kern   
                \expandafter\eeaa\the\fontdimen1\scriptscriptfont1 \ht0 }
\def\accentadjtextback#1{\setbox0\hbox{$#1$}\kern       
                -\expandafter\eeaa\the\fontdimen1\textfont1 \ht0 }
\def\accentadjscriptback#1{\setbox0\hbox{$#1$}\kern     
                -\expandafter\eeaa\the\fontdimen1\scriptfont1 \ht0 }
\def\accentadjscriptscriptback#1{\setbox0\hbox{$#1$}\kern 
                -\expandafter\eeaa\the\fontdimen1\scriptscriptfont1 \ht0 }

%
%


\renewcommand{\phi}{\varphi}

\newcommand{\R}{\mathbb{R}}

\newcommand{\N}{\mathbb{N}}

\newcommand{\Z}{\mathbb{Z}}

%
%
%
%
%

\newcommand{\limminus}{{\mathchoice{\raise.17ex\hbox{$\scriptstyle -$}}
                {\raise.17ex\hbox{$\scriptstyle -$}}
                {\raise.1ex\hbox{$\scriptscriptstyle -$}}
                {\scriptscriptstyle -}}}
\newcommand{\limplus}{{\mathchoice{\raise.17ex\hbox{$\scriptstyle +$}}
                {\raise.17ex\hbox{$\scriptstyle +$}}
                {\raise.1ex\hbox{$\scriptscriptstyle +$}}
                {\scriptscriptstyle +}}}
%
%

%

%
%
%
%
%
%
%
%
%
%

\makeatletter
\newcommand{\setcurrentlabel}[1]{\def\@currentlabel{#1}}
\makeatother

\makeatletter

\newcommand{\Rmnum}[1]{\expandafter\@slowromancap\romannumeral #1@}
\makeatother

%
%
\numberwithin{equation}{section}

\title[Discrete Approximations of Metric Spaces]{Discrete Approximations of Metric Measure Spaces of Controlled Geometry}
\author{James T. Gill}
\thanks{Acknowledgements: We thank IPAM and NSF for support.  The first author thanks the University of Cincinnati for their hospitality.  The second author wishes to thank Saint Louis University for their hospitality.  We also wish to thank Nageswari Shanmugalingam for her support and inspiration.
}

\author{Marcos Lopez}
\thanks{The first author was partially supported by the NSF grant DMS-1004721 and the second author was partially supported by the NSF grant DMS-1200915.}
\begin{document}

\maketitle
\begin{center}
\vspace{-.5cm}
{\small\hspace{.4cm}\email{jgill5@slu.edu} \hspace{0.4cm} \email{lopezms@mail.uc.edu*}}
\end{center}
\begin{abstract}
{ We find a necessary and sufficient condition for a doubling metric space to carry a $(1,p)$-Poincar\'e inequality.  The condition involves discretizations of the metric space and Poincar\'e inequalities on graphs.}
\end{abstract}
\keywords{\footnotesize Keywords: Gromov Hausdorff convergence, doubling condition, Poincar\'{e} inequality, analysis on metric measure spaces.} \\ \\
\keywords{\footnotesize MSC(2010) Classification: Primary 30L05, Secondary 31E05}

\section{Introduction}
It is well known that a doubling metric space which also supports some type of Poincar\'e inequality enjoys many other useful properties (see \cite{H} and the upcoming \cite{HKST} for examples).  However, it is often the case that when presented with an arbitrary metric measure space $(X, d_X, \mu)$, verifying that it satisfies a Poincar\'{e}-type inequality is difficult. In this paper we present a method of discretizing a metric measure space that is doubling and supports a Poincar\'{e} type inequality (see Section 2 for these definitions).  The constructed discretized space will (1) retain the doubling property as well as (2) support its own Poincar\'{e} type inequality. We also show that a doubling metric measure space $(X,d_X,\mu)$ only has a Poincar\'e inequality if some such discretization exists. By discretizing the space, the advantage is that we may verify these properties by checking only a finite number of points for each ball $B \subset X$. With enough symmetry or regularity of a space, this may be simple, {as we see in the example in Section \ref{sec:examples}}. This transforms the possibly difficult problem of verifying the doubling and Poincar\'{e} properties into a problem that is more computationally feasible. The method of discretization has been well studied in the study of analysis on metric measure spaces. L. Ambrosio, M Colombo, and S. Di Marino used an analog of dyadic cubes, introduced by M. Christ, to study the theory of Sobolev spaces on metric measure spaces (see \cite{Am} and \cite{M}). This approach to studying metric measure spaces follows the work of R.R. Coifman and G. Weiss (see \cite {CW}). We will use a method of using maximally $\epsilon$-separated subsets to discretize metric measure spaces that also follows their work, but requires different assumptions on our space. 

Our method is analogous to that used by P. Herman, R. Peirone, and R. Strichartz to study $p$-energy on the Sierpinski gasket (See \cite{HPS}). Their work focused on constructing energy forms on the gasket via natural energy forms on discrete approximating graphs. In our paper we are not interested in approximating energies on the metric space since the metric space is already equipped with the energy from from the upper gradient structure; we focus instead on Poincar\'e inequalities (which, in turn, are not available in \cite{HPS}). There has been work in modifying the natural metric on the Sierpinski gasket in order to ensure a Poincar\'e inequality, and this {\em harmonic Sierpinski gasket} discussed in \cite{Ka} by N. Kajino,  and Kusuoka in \cite{Ku}, uses a metric change that may not be bi-Lipschitz, and does not preserve many aspects of the original space. There has also been work studying the limits of Dirichlet forms on post-critically finite fractals following the work of Barlow and Bass (see \cite{BaBa}, \cite{HK}, \cite{KZ}, and \cite{KS}). However, these works are done on connected metric graphs, where here we present a discrete condition on highly non-connected spaces.  Also, recent notes by J. Cheeger and B. Kleiner \cite{CK} and \cite{CKa} studies Poincar\'e inequalities on discrete spaces and inverse limits, a different approach than our note here that restrict their scope to metric spaces that are topologically of dimension 1. In their work, they show that a metric space satisfies the $(1,1)$-Poincar\'e inequality if it is possible to construct an ``inverse limit", or equivalently a Gromov-Hausdorff limit. In our paper, we show an approach that holds for $(1,p)$-Poincar\'e inequalities with $p \geq 1$.


The setting considered in this paper is that of a general metric space $X$, endowed with a metric $d_X$ and a doubling Borel regular measure $\mu$; see Section 2 for precise definitions. We will construct a metric measured graph $(V, d_V, m)$  based on $X$ such that $m$ is a doubling measure, and show that $V$ also supports a Poincar\'{e} type inequality when $X$ does. {Throughout}  this note $1 \leq p < \infty$.  Our main results are as follows:

\begin{thm} \label{XtoV}
Let $(X,d_X,\mu)$ be a complete doubling metric measure space that supports a $(1,p)$-Poincar\'e inequality.  Then any discretized space $(V,d_V,m)$, constructed from $(X, d_X, \mu)$ in the manner given in Section \ref{sec:constr}, is also doubling and supports a $(1,p)$-Poincar\'e inequality with data quantitatively derived from the data of $(X, d_X,\mu)$.
\end{thm}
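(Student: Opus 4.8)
The plan is to transfer the inequality from $X$ to $V$ by means of a discrete-to-continuous extension operator: given a function on $V$, extend it to $X$ so that its upper gradient is controlled by the discrete gradient, apply the $(1,p)$-Poincar\'e inequality that $X$ already enjoys, and then pass the resulting estimate back to the discrete side. The doubling of $m$ is handled first, as it is both easier and needed to compare averages later. Since $V$ is a maximally $\eps$-separated net, the balls $B(v,\eps/2)$ for $v\in V$ are pairwise disjoint while the balls $B(v,\eps)$ cover $X$ with uniformly bounded overlap (the overlap bound depending only on the doubling constant of $\mu$). Assigning each vertex the mass $m(\{v\})$ comparable to $\mu(B(v,\eps))$, one sandwiches $m(B_V(v_0,r))$ between $\mu$ of an inscribed and a circumscribed ball in $X$ of radius comparable to $r$: summing the disjoint inner balls gives the lower bound, and the bounded-overlap cover gives the upper bound. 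Doubling of $m$ then follows from doubling of $\mu$ up to fixed multiplicative constants coming from the overlap count. Here one uses that $X$, being complete, doubling and supporting a Poincar\'e inequality, is quasiconvex, so that $d_V$ is comparable to $d_X$ at scales above $\eps$ and discrete balls correspond to genuine metric balls.

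The heart of the argument is the Poincar\'e inequality. Given an admissible pair $(f,g)$ with $f\colon V\to\R$ and $g$ a discrete upper gradient, I would build a continuous extension $\tilde f\colon X\to\R$ via a Lipschitz partition of unity $\{\phi_v\}_{v\in V}$ subordinate to the cover $\{B(v,2\eps)\}$, with $\sum_v\phi_v\equiv 1$ and each $\phi_v$ being $(C/\eps)$-Lipschitz, and set $\tilde f=\sum_v f(v)\phi_v$. Because the $\phi_v$ sum to one, near a point $x$ with nearest vertex $v_0$ one has $\tilde f(x)-f(v_0)=\sum_v\bigl(f(v)-f(v_0)\bigr)\phi_v(x)$, so a pointwise upper gradient of $\tilde f$ at $x$ is dominated by $C\eps^{-1}$ times the maximal difference $|f(v)-f(w)|$ over neighbouring vertices lying within distance $\sim\eps$ of $x$. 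This quantity is in turn bounded by the discrete gradient $g$ evaluated over the boundedly many vertices near $x$, giving a pointwise estimate $g_{\tilde f}\lesssim \sum_{v:\,d_X(x,v)\lesssim\eps} g(v)$.

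With the extension in hand I would apply the $(1,p)$-Poincar\'e inequality of $X$ to $(\tilde f, g_{\tilde f})$ on the ball $B_X(v_0,Cr)$ associated to a given discrete ball $B_V(v_0,r)$. The oscillation term $\int|\tilde f-(\tilde f)_{B}|\,d\mu$ is comparable to the discrete oscillation $\sum_{v\in B_V}|f(v)-f_{B_V}|\,m(\{v\})$, since $\tilde f$ equals $f$ at vertices, is a local average of nearby vertex values elsewhere, and the continuous and discrete averages agree up to the measure comparison of the first step. On the energy side, the pointwise upper-gradient bound together with the bounded overlap of the cells gives $\int_{\lambda B_X} g_{\tilde f}^{\,p}\,d\mu \lesssim \sum_{v\in\lambda' B_V} g(v)^p\,m(\{v\})$ over a slightly dilated discrete ball. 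Combining these comparisons yields the discrete $(1,p)$-Poincar\'e inequality on $V$ with a possibly enlarged dilation constant and with $C$ depending only on the data of $X$ and the construction parameters, as claimed.

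The main obstacle is the upper-gradient estimate for the extension: producing a clean pointwise bound \emph{purely} in terms of the discrete gradient, while keeping the combinatorics---which vertices contribute at a given point, and over which dilated ball their contributions are counted---uniformly controlled so that every constant remains quantitative in the doubling and Poincar\'e data of $X$. A secondary technical point is matching the dilation factors and radii on the two sides, so that a discrete ball of radius $r$ is only ever compared with $X$-balls of radius a bounded multiple of $r$; this relies precisely on the $\eps$-separation of the net and on the comparability of $d_V$ and $d_X$ above the scale $\eps$.
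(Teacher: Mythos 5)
Your overall strategy coincides with the paper's: discretize the measure as in Section \ref{sec:constr}, prove doubling of $m$ by sandwiching discrete balls between inscribed and circumscribed $X$-balls (using quasiconvexity, bounded overlap, and doubling of $\mu$ --- this is exactly Lemma \ref{DoubXV} and Lemma \ref{mcompmu}), and then transfer the Poincar\'e inequality via a partition-of-unity extension $\tilde f=\sum_v f(v)\phi_v$, a pointwise bound of $\Lip \tilde f$ by the discrete gradient, and an application of the $(1,p)$-inequality on $X$. However, one step of your oscillation comparison fails as stated: the claim that ``$\tilde f$ equals $f$ at vertices.'' It does not. The net is only $\eps$-separated, so a vertex $v_0$ generally has other vertices $v$ with $\eps \leq d_X(v,v_0)<2\eps$, and for such $v$ one has $\phi_v(v_0)>0$; hence $\tilde f(v_0)=\sum_v f(v)\phi_v(v_0)$ is a genuine convex combination of $f(v_0)$ with neighboring values, not $f(v_0)$ itself. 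The extension does not interpolate the discrete data, so the identification of $\int_{B_X}|\tilde f-(\tilde f)_{B_X}|\,d\mu$ with the discrete oscillation is not ``up to measure comparison'' alone.

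The repair is precisely the extra work in the paper's Section 5. Writing $f(v_0)=\sum_v \bigl(f(v_0)-f(v)\bigr)\phi_v(x)+\tilde f(x)$ for $x$ within $\eps/2$ of $v_0$, the mismatch between the vertex value and the extension is bounded by $\sum_{v\sim v_0}|f(v)-f(v_0)| = \eps\,|\grad f(v_0)|$, since every $v$ contributing to the sum is a graph neighbor of $v_0$. Passing to the symmetrized double-average form of the left-hand side (which is comparable to the mean oscillation on any metric measure space), the discrete oscillation is then bounded by the continuous oscillation of $\tilde f$ \emph{plus} an additive error of size $\eps\avint_{B_V}|\grad f|\,dm$. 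The continuous term is handled by the Poincar\'e inequality of $X$ exactly as you propose, while the error term must be absorbed into the right-hand side using $\eps<r$ (the case $r\leq\eps$ is trivial, since then $B_V$ is a single vertex and the left side vanishes) together with H\"older's inequality to pass from the $L^1$ to the $L^p$ average of $|\grad f|$. With this correction --- which costs only a constant and does not change the dilation structure --- your argument goes through and is the same as the paper's; your energy-side estimate and the doubling part need no modification.
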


We then turn our attention to the converse of Theorem \ref{XtoV}, which requires some preliminary definitions.  Let $(V, d_V, m)$ be a graph with metric $d$ and measure $m$.  By {\em graph} we mean a set of vertices $V$ with an associated edge set $E$, which we suppress in the notation by only refering to the graph as $V$.  In this paper for each graph there is a constant $\epsilon_V$ so that if vertices $x$ and $y$ are connected by an edge, the distance $d_V(x,y) = \epsilon_V$.  Distances between other vertices $x$ and $y$ are defined via $n\cdot \epsilon_V$ where $n$ is the smallest length of a sequence $x = x_0, x_1, x_2, \ldots, x_n=y$ where $x_i$ and $x_{i+1}$ are connected by an edge.  By $B_V (x,r)$ we mean all vertices of distance strictly less than $r$ from $x$.  The measure $m$ is simply an assignment of a positive mass to each vertex and, as it is discrete, it is defined on all subsets of $V$.  For a (discrete) graph $(V,d_V,m)$ and a metric measure space $(X,d, \mu)$ an {\em embedding} of $V$ into $X$ is a one-to-one map from the vertex set $V$ into the space $X$.  For a sequence of graphs $(V_i, d_{V_i}, m_i)$ with $V_i \subset V_{i+1}$ for $i \geq 0$ by a {\em nested embedding} of $(V_i, d_{V_i}, m_i)$ into $X$ we mean a sequence of embeddings $n_i: V_i \to X$ such that $n_{i+j} |_{V_i} = n_i$ for $i, j \geq 0$.  Note this definition of nested embedding does not imply that if $x$ and $y$ are connected by an edge in $V_i$, they are still connected by an edge in $V_{i+1}$.  In general, they will not be connected by an edge in $V_{i+1}$.
{Our second result shows that the discretezation from Theorem \ref{XtoV} can also yield information about the space $(X,d,\mu)$.}
\begin{thm}\label{VtoX}
Let $(X,d,\mu)$ be a complete doubling metric measure space.  Then $(X,d,\mu)$ supports a $(1,p)$-Poincar\'e inequality if and only if there exists a nested embedded sequence of graphs $(V_i, d_{V_i}, m_i)$ into $X$ such that
\begin{enumerate}
\item[(1)] The Hausdorff distance, $d_H (n_i(V_i), X) = H_i$, is finite for all $i \geq 0$ and $H_i \to 0$ as $i \to \infty$.
\item[(2)] There is a uniform $L > 1$ such that for all $i \geq 0$ and all $x,y \in V_i$,
\[ \frac{1}{L} d (n_i(x), n_i(y)) \leq d_{V_i} (x,y) \leq L \, d (n_i(x), n_i(y)) \]
\item[(3)] There is a uniform $K > 1$ such that for all $i \geq 0$ all $r > H_i$ and $x \in V_i$
\[ \frac1K \leq \frac{m_i (B_{V_i}(x,r))}{\mu (B_X (x,r))} \leq K\]
\item[(4)] $(V_i, d_{V_i}, m_i)$ are all doubling metric measure spaces with uniform doubling constant.
\item[(5)] $(V_i, d_{V_i}, m_i)$ all support a (1,p)-Poicar\'e inequality with uniform {data}.
\end{enumerate}
\end{thm}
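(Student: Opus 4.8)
The plan is to prove the two implications separately: the forward implication reduces almost entirely to Theorem~\ref{XtoV}, while the reverse implication carries the analytic content.

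\emph{Forward direction.} Suppose $X$ supports a $(1,p)$-Poincar\'e inequality. I would fix scales $\epsilon_i\to 0$ and build the graphs inductively: take $V_0$ to be a maximal $\epsilon_0$-separated subset of $X$ and, given $V_i$, extend it by a greedy (Zorn) selection to a maximal $\epsilon_{i+1}$-separated set $V_{i+1}\supseteq V_i$, which is possible since an $\epsilon_i$-separated set is automatically $\epsilon_{i+1}$-separated. Letting $n_i$ be the inclusion $V_i\hookrightarrow X$ makes the embeddings nested. Property~(1) is then immediate from maximality, since every point of $X$ lies within $\epsilon_i$ of $V_i$; property~(3) follows from the doubling property together with the choice of $m_i$ in Section~\ref{sec:constr}, whose defining balls have boundedly finite overlap. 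For property~(2) I would invoke that a complete doubling space admitting a Poincar\'e inequality is quasiconvex, so that stringing vertices along a quasigeodesic from $n_i(x)$ to $n_i(y)$ bounds $d_{V_i}$ from above by $L\,d(n_i(x),n_i(y))$, the reverse bound being built into the edge-length normalisation. Finally properties~(4) and~(5) are exactly the output of Theorem~\ref{XtoV} applied to each $(V_i,d_{V_i},m_i)$, with uniform constants because all data are quantitatively derived from the fixed data of $X$.

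\emph{Reverse direction.} Assume a nested sequence with properties~(1)--(5); I want the $(1,p)$-Poincar\'e inequality on $X$, which it suffices to verify for a ball $B=B(x_0,r)$ and a pair $(u,g)$ with $u$ Lipschitz and $g$ an upper gradient (a truncation and density argument removes the Lipschitz hypothesis). The first step is to note that properties~(1),~(2) and~(4) already force $X$ to be quasiconvex: a discrete $d_{V_i}$-geodesic between vertices approximating two points $x,y$ has length at most $L\,d(x,y)+o(1)$, and since $X$ is complete and doubling, hence proper, an Arzel\`a--Ascoli argument extracts from these discrete paths a rectifiable curve in $X$ joining $x$ to $y$ of length at most $L\,d(x,y)$. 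Thus nearby points are joined by short curves, which lets me estimate edge increments of $u$ by line integrals of $g$.

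The heart of the proof is then a transfer-and-limit scheme. For large $i$ I restrict $u$ to the vertices, set $u_i:=u\circ n_i$, and encode its edge difference quotients as a discrete upper gradient $g_i$ on $V_i$. Applying the graph Poincar\'e inequality~(5) on $B_{V_i}(v_0,r)$ with $v_0$ a vertex near $x_0$ gives
\[
\dashint_{B_{V_i}}\bigl|u_i-(u_i)_{B_{V_i}}\bigr|\,dm_i\;\le\;C\,r\Bigl(\dashint_{\lambda B_{V_i}}g_i^p\,dm_i\Bigr)^{1/p},
\]
with $C$ and $\lambda$ independent of $i$. By continuity of $u$, property~(1) and the measure comparison~(3), the left-hand side converges to $\dashint_B|u-u_B|\,d\mu$ as $i\to\infty$. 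It remains to dominate the right-hand side in the limit by $C\,r\bigl(\dashint_{\lambda'B}g^p\,d\mu\bigr)^{1/p}$, and here lies the main obstacle: a single edge increment is controlled only by the line integral of $g$ over one short curve, whereas the target is an $L^p$-average of $g$ over a ball, and line integrals over single curves are not dominated by such averages. I expect to resolve this exactly as in the classical theory, replacing each single curve by a Semmes-type pencil of curves joining the endpoints of the edge and defining $g_i$ through averages of $g$ over these families; the bounded overlap of the pencils, guaranteed by doubling, then converts the discrete sum into a genuine $L^p$-estimate, so that a Riemann-sum comparison together with lower semicontinuity of the $p$-energy bounds $\limsup_i\int_{\lambda B_{V_i}}g_i^p\,dm_i$ by a fixed multiple of $\int_{\lambda'B}g^p\,d\mu$. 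Passing to the limit yields the Poincar\'e inequality on $X$.
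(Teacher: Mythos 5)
Your forward direction is sound and is essentially the paper's: discretize at scales $\epsilon_i=2^{-(i-1)}$ and quote Theorem \ref{XtoV}; your inductive (Zorn) construction of nested maximal separated sets is in fact more careful than the paper, which takes the nestedness of its discretizations for granted. The gap is in the reverse direction, at exactly the step you flag as the main obstacle. Your proposed resolution --- replacing the single quasiconvexity curve per edge by a Semmes-type pencil of curves, with ``the bounded overlap of the pencils, guaranteed by doubling'' --- is not available: doubling does \emph{not} guarantee pencils of curves. For complete doubling spaces, the existence of Semmes pencils with a bounded-overlap (Carleson-type) condition already implies the $(1,1)$-Poincar\'e inequality, which is at least as strong as any inequality you are trying to prove, so invoking their existence begs the question. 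Concretely, two copies of $\mathbb{R}^2$ glued at a point form a complete, doubling, quasiconvex space; every curve joining points in different sheets must pass through the glue point, so no family of curves joining such points can have bounded overlap, and correspondingly this space fails the $(1,p)$-Poincar\'e inequality for $p\le 2$. Thus quasiconvexity (which you do correctly derive from (1), (2) and properness) together with doubling cannot produce the pencils; they would have to be extracted from the discrete inequalities (5), used at all scales simultaneously in a chaining argument, and that multi-scale argument is precisely what is missing from your proposal.

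That missing argument is, in substance, the stability theorem of Cheeger and Keith, and the paper's proof is organized so as to quote it rather than reprove it: each $V_i$ is filled in to a connected one-complex $G_i$ carrying a genuine (non-discrete) $(1,p)$-Poincar\'e inequality and a doubling measure $\overline m_i$ with uniform data; after a bi-Lipschitz change of metric built from condition (2), a subsequence of $(G_i,d_{G_i},\overline m_i)$ converges in the pointed measured Gromov--Hausdorff sense to $(X,d_X,\overline\mu)$ with $\overline\mu$ comparable to $\mu$ (Lemma \ref{Conv}); then Theorems \ref{Cheeg1} and \ref{GHconv} transfer doubling and the Poincar\'e inequality to the limit, and comparability of $\overline\mu$ with $\mu$ finishes. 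The gradient-transfer difficulty you are wrestling with lives inside the proof of Theorem \ref{GHconv}; it is not solved by pencils-from-doubling. A secondary, repairable imprecision in your limit scheme: condition (3) gives only comparability, not convergence, of $m_i$ to $\mu$ on balls, so your left-hand sides $\dashint_{B_{V_i}}|u_i-(u_i)_{B_{V_i}}|\,dm_i$ need not converge to $\dashint_B|u-u_B|\,d\mu$; one only gets two-sided bounds up to constants depending on $K$ and $L$, which suffices for a Poincar\'e inequality with worse constants but should be stated as such.
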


For the definition of Hausdorff distance see (\ref{Hdist}), for the definition of doubling metric space see Section 2, and for the type of Poincar\'e inequality assumed see (\ref{dpi}).

In Section 2, we review applicable definitions for this paper. Section~3 focuses on constructing $V$, a discretization of $X$, and endowing the set with a metric $d_V$ and measure $m$ that are derived from $d_X$ and $\mu$.  In Section 4 we verify that $(V, d_V, m)$ also satisfies the doubling property. Section 5 is dedicated to showing that $(V, d_V, m)$ satisfies a discretized version of the Poincar\'{e} inequality. Sections 4 and 5 together provide the proof of Theorem~\ref{XtoV}. In Section~6, we review the pertinent definitions of pointed measured Gromov-Hausdorff convergence {which will be necessary to our proof of Theorem \ref{VtoX}.} Section~7 discusses an example from Euclidean space showing the necessity of the conditions in Theorem \ref{VtoX} {as well as showing how one might check for a discrete Poincar\'e inequality}. Finally, Section 8 is dedicated to {the proof of} Theorem \ref{VtoX}.

\section{Preliminaries}
In this section we introduce some necessary definitions. All of this section is standard and may be skipped by the expert on metric measure spaces.  A nontrivial locally finite Borel regular measure $\mu$ on a metric space $(X,d_X)$ is called a \textit{doubling measure} if every metric ball, $B$, has positive and finite measure and there exits a constant, $C\geq 1$, such that
\[
\mu(B(x, 2r)) \leq C\mu(B(x,r))
\]
for each $x$ in $X$ and $r > 0$. We call the triple $(X,d_X,\mu)$ a \textit{doubling metric measure space} if $\mu$ is a doubling measure on $X$. The smallest constant $C \geq 1$ such that the above inequality holds is referred to as the \textit{doubling constant $C_\mu$ of} $\mu$. An $\epsilon$-\textit{separated set}, $\epsilon > 0$, in a metric space is a set such that every two distinct points in the set are  at least $\epsilon$ distance apart. Given a metric space $X$, an $\epsilon$-separated set $A \subset X$ is said to be $maximal$ if for any $x \in X\backslash A$, the distance from $x$ to $A$ is less than $\epsilon$. The metric $d_X$ is said to be \textit{doubling metric with constant N} if $N \geq 1$ is an integer such that for each ball $B(x,r) \subset X$, every $\frac{r}{2}$ - separated set in $B(x,r)$ has at most $N$ points.

It is easy to show that if $(X, d_X,\mu)$ is a doubling metric measure space, then $d_X$ is also a doubling metric with some constant that depends only on $C_\mu$: let $B(x,r)$ be given. Let $A$ be some maximal $r/2$ separated set of $X$. To see that a maximal $r/2$-separated subset of $X$ exists, see Chapter 10 of \cite{H}. If $A \cap B(x,r)$ contains $I$ points,  $a_1, a_2, \ldots, a_I$, where $I \subset \N$ is an indexing set, then the set of balls $\{B(a_i, r/2)\}_{i \in I}$ cover $B(x,r)$ by the maximality of $A$.   Note that the balls $\{B(a_i, r/4)\}_{i \in I} \subset B(x,2r)$, and are pairwise disjoint. Then for $N \in I$,
 \[
 N\mu(B(x,2r)) \leq \sum_{i=1}^N C_\mu^3\mu(B(a_i,\frac{r}{2})) \leq C_\mu^4\sum_{i=1}^N\mu(B(a_i,\frac{r}{4})) \leq C_\mu^4\mu(B(x,2r)).
 \]

Thus, $N \leq C_\mu^4$ when $\mu$ is locally finite. Notice that the assumption that $\mu$ is positive on balls also implies that $N$ must be a finite number, because to be infinite would imply that $\mu(B(x,2r))$ is infinite which contradicts our assumption of $\mu$ being locally finite. Since this holds for all $N \in I$, then the cardinality of $I$ must also be less than or equal to $C_\mu^4$.

The above metric property was {formulated} by Coiffman and Weiss in \cite{CW}, and  proves to be of great importance in the study of Sobolev spaces. In particular, doubling spaces can be shown to be separable. If, in addition to being doubling, a metric space is complete, then it is proper.  Note that as in the statements of Theorems \ref{XtoV} and \ref{VtoX} will always assume that $X$ is equipped with a complete metric $d$, and a locally finite Borel regular measure $\mu$.

Let $u$ be a real-valued measurable function on $X$. A non-negative Borel function  $\rho: X \rightarrow [0,\infty]$ is said to be an \textit{upper gradient} of $u$ if for all compact rectifiable paths $\gamma: [a,b] \to X$, the following inequality holds:
\[
|u(\gamma(a)) - u(\gamma(b))| \leq \int_\gamma \rho \,ds
\]
where $\,ds$ is the arc-length measure on $\gamma$, induced by the metric $d_X$ on $X$. (see Chapter 7 of \cite{H}).
A separable metric measure space $(X, d_X,\mu)$ is said to \textit{support a $(1,p)$-Poincar\'{e} inequality} if every ball $B \subset X$  has positive and finite measure and if there exist constants $C > 0, \lambda \geq 1$ such that
\begin{align}
\dashint_B|u - u_B|d\mu \leq Cr\left(\avint_{\lambda B} \rho^p\,d\mu\right)^{\frac{1}{p}} \label{pi}
\end{align}
for every measurable function $u: X \rightarrow \mathbb{R}$ that is integrable on balls and every upper gradient $\rho$ of $u$. In the above inequality, when the center and radius are clear from context, $B$ is written as the shorthand of $B(x,r)$ and $\lambda B:= B(x,\lambda r)$. The notation of $\dashint_B$ is the average integral over the ball $B$. That is,
  \[
  \dashint_B u\, d\mu := \frac{1}{\mu(B)}\int_B u \, d\mu \ \ =:\ u_B
  \]
  for any integrable function $u$ on $B$.  The parameters $p, C,$ and $\lambda$ are called the \textit{data} of the Poincar\'{e} inequality.

\section{Construction of the approximating graphs}
\label{sec:constr}

Let $(X,  d_X, \mu)$ be a doubling metric measure space with doubling constant $C_\mu$, and let $A \subset X$ be a maximal $\epsilon$-separated set for some given $\epsilon >0.$ For each $x \in A$, we associate a vertex $\tilde{x} \in V_\epsilon =: {V}.$
We say that $\tilde{x} \sim \tilde{y}$ if and only if $\epsilon \leq \,d(x,y)\leq 3\epsilon$. We let $\sim $ define an edge set.  We will use this relation between points in $V$ to define a metric on $V$. It is worth noting here, that $(V, \sim)$ is a discrete graph, possessing only a discrete topology. In Section 8, this graph will be extended to a connected graph, but for the majority of this paper, all of the calculations involving $V$ will only use points on the vertex set $V$. To highlight this point, we will often only refer to the vertex set $V$ which is in a 1-1 correspondence with the set $A\subset X$, and therefore may be thought of as an embedding in $X$. This canonical embedding can be given by identifying $x\in A$ with $\tilde{x} \in V$. Notice that we require the distance between two points of $A$ to be positive in order for a corresponding edge to be made in $(V,\sim)$. This is to ensure that $(V,\sim)$ has no loops of zero length. We define a distance on $V$, denoted $d_V$, such that $d_V(\tilde{x}, \tilde{y})= \epsilon$ for all $\tilde{x}\sim \tilde{y}$. We extend this distance function for $\tilde{x}$ and $\tilde{y}$ that do not share an edge the distance between them is the obvious one as stated in the introduction after the statement of Theorem \ref{XtoV}.  It is clear that $d_V$ is a metric. The ball centered at $\tilde{x}$ with radius $r$ is denoted $B_V(\tilde{x}, r):= \{\tilde{y}\in V | d_V(\tilde{x}, \tilde{y}) < r\}$. Note that when $r \leq \epsilon$, we have that $B(\tilde{x}, r) = \{\tilde{x}\}$.  We use $B_V$ to denote balls in the graph metric and $B_X$ to denote balls in the original metric space.

It can be shown that a complete and doubling metric measure space that supports a Poincar\'e inequality is {\it $L$-quasiconvex}. That is, for the space $X$, there is a constant $L \geq 1$, depending only on the doubling constant and the data from the Poincar\'{e} inequality, such that each pair of points $x$, $y \in X$ can be joined by a rectifiable curve $\alpha$ in $X$ such that length$(\alpha) \leq L d_X(x, y)$. This is result is due to S. Semmes, but proofs may be found in \cite{K} and \cite{KH}. The quasiconvexity of the metric measure spaces allow us to observe a very useful property addressed in the following proposition.

\begin{prop}
\label{blipcomp}
 For a complete doubling metric space $(X,d_X)$ that is quasiconvex with constant $L$, the canonical embedding of $(V, d_V)$ into $X$ is bi-Lipschitz:
\[ \frac{1}{L+1} d_V(\tilde{x},\tilde{y}) \leq d_X(x,y) \leq 3d_V(\tilde{x},\tilde{y}) \]
\end{prop}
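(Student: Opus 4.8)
The plan is to prove the two inequalities separately: the right-hand (upper) bound is an immediate consequence of the triangle inequality in $X$, while the left-hand (lower) bound is where quasiconvexity does the real work. For the upper bound I would write $d_V(\tilde{x},\tilde{y}) = n\epsilon$ and fix a minimal edge-path $\tilde{x} = \tilde{x}_0 \sim \tilde{x}_1 \sim \cdots \sim \tilde{x}_n = \tilde{y}$ realizing this distance. By the definition of $\sim$, each consecutive pair satisfies $d_X(x_i,x_{i+1}) \leq 3\epsilon$, so the triangle inequality gives $d_X(x,y) \leq \sum_{i=0}^{n-1} d_X(x_i,x_{i+1}) \leq 3n\epsilon = 3\,d_V(\tilde{x},\tilde{y})$.

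For the lower bound it suffices to show $d_V(\tilde{x},\tilde{y}) \leq (L+1)\,d_X(x,y)$; we may assume $x \neq y$, in which case $d_X(x,y) \geq \epsilon$ since $A$ is $\epsilon$-separated. First I would invoke quasiconvexity to join $x$ and $y$ by a rectifiable curve $\alpha$ with length $\ell(\alpha) \leq L\,d_X(x,y)$, parametrized by arc length on $[0,\ell]$ with $\ell = \ell(\alpha)$. Setting $N = \lceil \ell/\epsilon \rceil$ and sampling at $t_j = j\epsilon$ for $0 \leq j \leq N-1$ together with $t_N = \ell$, consecutive samples obey $d_X(\alpha(t_j),\alpha(t_{j+1})) \leq t_{j+1}-t_j \leq \epsilon$. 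Using maximality of $A$, I would snap each sample to a point $a_j \in A$ with $d_X(\alpha(t_j),a_j) < \epsilon$, choosing $a_0 = x$ and $a_N = y$.

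The key verification is that, after deleting consecutive repetitions, the sequence $a_0, a_1, \dots, a_N$ is an admissible edge-walk in $V$. Indeed, for distinct consecutive points the triangle inequality gives $d_X(a_j,a_{j+1}) < \epsilon + \epsilon + \epsilon = 3\epsilon$, while $\epsilon$-separation of $A$ forces $d_X(a_j,a_{j+1}) \geq \epsilon$; hence $\tilde{a}_j \sim \tilde{a}_{j+1}$. This walk uses at most $N$ edges, so $d_V(\tilde{x},\tilde{y}) \leq N\epsilon \leq \ell + \epsilon \leq L\,d_X(x,y) + \epsilon$. Finally, absorbing the additive error via $\epsilon \leq d_X(x,y)$ yields $d_V(\tilde{x},\tilde{y}) \leq (L+1)\,d_X(x,y)$, as desired. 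As a byproduct, this construction also shows that any two vertices are joined by an edge-walk, so $V$ is connected and $d_V$ is finite.

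I expect the only genuine subtlety to lie in the bookkeeping around the snapping step: one must discard consecutive duplicate vertices so that each retained edge honestly satisfies the \emph{lower} adjacency bound $d_X(a_j,a_{j+1}) \geq \epsilon$, and one must then absorb the leftover additive $\epsilon$ into the multiplicative estimate using the separation $d_X(x,y) \geq \epsilon$. Everything else reduces to direct applications of the triangle inequality together with the maximality and $\epsilon$-separation properties of $A$ and the arc-length bound $d_X(\alpha(s),\alpha(t)) \leq |s-t|$.
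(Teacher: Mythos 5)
Your proof is correct and follows essentially the same route as the paper: quasiconvexity produces a curve of length at most $L\,d_X(x,y)$, which is sampled at arc-length spacing $\epsilon$ and snapped to the maximal $\epsilon$-separated set $A$ to produce an edge-walk of at most $\lceil \ell/\epsilon\rceil$ steps, giving $d_V(\tilde{x},\tilde{y}) \leq L\,d_X(x,y)+\epsilon \leq (L+1)\,d_X(x,y)$. The only cosmetic difference is that the paper first splits off the case $d_X(x,y)\leq 3\epsilon$ (where $\tilde{x}\sim\tilde{y}$ or $\tilde{x}=\tilde{y}$) before running this argument, whereas you treat all cases uniformly by absorbing the additive $\epsilon$ through the separation bound $d_X(x,y)\geq\epsilon$, and you also spell out the upper bound $d_X \leq 3\,d_V$ and the removal of duplicate vertices, which the paper leaves implicit.
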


\begin{proof}
We begin by showing the first inequality. Let $\tilde{x}$ and $\tilde{y}$ be two points in the vertex set $V$, and let $x$ and $y$ be their respective corresponding points in $X$.
If $d_X(x,y) \leq 3\epsilon$ then $\tilde{x} \sim \tilde{y}$ or $\tilde{x} = \tilde{y}$. Thus, either $d_V(\tilde{x}, \tilde{y}) = \epsilon$ or $d_V(\tilde{x}, \tilde{y}) = 0$. The latter case satisfies the proposition trivially, and the former case follows by seeing that
\[
\frac{\epsilon}{L+1} \leq 3\epsilon.
\]
Thus, without loss of generality, we assume that $d_X(x,y) > 3\epsilon$.  Let $\gamma$ be a rectifiable curve from $x$ to $y$ such that length$(\gamma) \leq L d_X(x,y)$. Let $T:= $length$(\gamma)$, and notice that $T > 3\epsilon$. Since $\gamma$ is rectifiable, we assign it the arc-length parameterization. Choose $K$ as the smallest integer such that $T \leq K\epsilon < Ld_X(x,y)+\epsilon$. Notice that by assumption, $d_X(x,y) > 3\epsilon$, so such $K$ exists. For $i = 0, 1, \dots, K -1$ we choose $t_i = i\epsilon$, and define $t_K := T$. Then, for $i = 1, \dots, K$ there are subcurves $\gamma_i := \gamma([t_{i-1}, t_i])$ of $\gamma$ such that length$(\gamma_i) = \epsilon$ with the exception of $\gamma_{K}$ which may have length less than or equal to $\epsilon$. Let $x_i:= \gamma(t_i)$. By the maximality of $A$, for each $x_i$ there exists a point $z_i \in A$ such that $d_X(x_i, z_i) \leq \epsilon$. It is clear that we can choose $z_0 = x$ and $z_K = y$. We find that for $i = 1, \dots, K$,
\[
d_X(z_{i-1}, z_i) \leq d_X(z_{i-1}, x_{i-1}) +d_X(x_{i-1}, x_i) + d_X(x_i, z_i) \leq 3\epsilon.
\]
Since each $z_i$ is in $A$, then it has a corresponding point $\tilde{z}_i$ in $V$. Hence, for each $i$ we have that $\tilde{z}_i \sim \tilde{z}_{i-1}$  or $z_i = z_{i-1}$. So $d_V(z_i, z_{i-1})\leq \epsilon$, and
\begin{align*}
d_V(\tilde{x}, \tilde{y})\leq K\epsilon \leq Ld_X(x,y) + \epsilon \leq Ld_X(x,y) + d_X(x,y) & = (L+1)d_X(x,y).
\end{align*}

The second inequality follows easily from the definition of the distance on $V$ and the triangle inequality on $X$.
\end{proof}

We now wish to equip $V$ with a measure $m$  which is related to $\mu$. By the maximality of $A$ (and its one to one correspondence to $V$), $X=\underset{\tilde{x} \in V}{\bigcup} B_{X}(x, \epsilon)$. For any $W\subset V$, we define
\[
m(W) := \underset{\tilde{y} \in W}{\sum}\mu(B_X(y,\epsilon)).
\]
For example, if $r < \epsilon$, then $m(B_V(\tilde{x}, r)) = \mu(B_X(x,\epsilon))$. In particular, for any $\tilde{x} \in V$, we set $m(\tilde{x}) = \mu(B_X(x,\epsilon))$.
We see that $m$ is a measure on the $\sigma$ - algebra generated by the open balls in $V$. We note that, in general,
\[
m(W) \neq \mu\left(\bigcup_{\tilde{y} \in W} B_X(y,\epsilon)\right).
\]
Hence, $(V, d_V, m)$ is a metric measure space.
For $W \subset V$ and $u: V \rightarrow \R$,  the definition of $\int_W u(\tilde{x}) dm(\tilde{x})$ is given by
\[
\int_W u(\tilde{x})\,dm(\tilde{x}) := \underset{\tilde{x} \in W}{\sum} u(\tilde{x})m(\tilde{x}).
\]
When the context is clear, we will often use the following notation for a fixed $\tilde{x} \in V$:
\[
\int_{\tilde{x} \sim \tilde{y}} u : = \sum_{\tilde{x} \sim \tilde{y}} u(\tilde{y})
\]
This helps us  when we wish to sum only over neighbors, but should not be confused with an integral over the function $u$, as this is in itself a function on $V$ evaluated at the point $\tilde{x}$. This should be made clear from the lack of associated measure in the notation. In a similar manner to (\ref{pi}), we may define $u_{B_V}$ and $\avint_{B_V}$ for a function $u: V\rightarrow \R$:
\[
u_{B_V} := \avint_{B_V} u(\tilde{x})dm(\tilde{x}) := \frac{1}{m(B_V)} \int_{B_V} u(\tilde{x})dm(\tilde{x})
\]
to echo the meaning of $\dashint_{B_X} f(x)dx$. That is,  $\avint_{B_V} u(\tilde{x})dm(\tilde{x})$ is an  $m$-weighted average value of $u$ over the ball $B_V$.
	
We now describe a \textit{discretized} version of the $(1,p)$-Poincar\'e inequality that was introduced by I. Holopainen and P. Soardi in \cite{HS}.
\begin{deff}
We say that $V$ \textit{supports a (discrete) $(1,p)$-Poincar\'{e} inequality} if there exist some constants $C >0$ and $\lambda \geq 1$ such that for all functions $u: V \rightarrow \mathbb{R}$, and each $B_V= B(\tilde{v},r) \subset V$,
\begin{align}
\avint_{B_V}|u(\tilde{x}) - u_{B_{V}}| dm(\tilde{x}) \leq C r \left(\avint_{\lambda B_V} \left( \int_{\tilde{x} \sim \tilde{y}} \frac{|u(\tilde{x})-u(\tilde{y})|^p}{\epsilon^p} \right) dm(\tilde{x})\right)^\frac{1}{p}. \label{dpi}
\end{align}
\end{deff}

Note that the quantity
\[
\frac{|u(\tilde{x})-u(\tilde{y})|}{\epsilon}
\]
can be seen as a type of upper gradient when compared to $\rho$ in (\ref{pi}) if we consider the edge from $\tilde{x}$ to $\tilde{y}$ as an isometric copy of the interval $[0, \epsilon]$.
We wish to rename the quantity on the right hand side of (\ref{dpi}) for simplicity of exposition. Given a function  $u$ on the vertex set $V$, and $\tilde{a} \in V$ we define
\[
\left|\grad (u(\tilde{a}))\right| := \int_{\tilde{b} \sim \tilde{a}} \frac{| u(\tilde{b})-u(\tilde{a}) |}{\epsilon}.
\]
This $|\grad(u)|$ function is often referred to as the ``$p$-Laplacian", and is used to define the ``graph energy" on $V$ (see \cite{HPS} for the case that $V$ is a discretization of the Sierpinksi gasket). Note that although $\epsilon$ is a fixed number here, later in this note we will be considering a sequence of graphs constructed with different $\epsilon$ values. We suppress the dependency of $|\grad(u)|$ on $\epsilon$ in the notation.
Then (\ref{dpi}) becomes
\begin{equation}
\label{newdpi}
\avint_{\tilde{x} \in B_V}|u(\tilde{x}) - u_B| dm(\tilde{x}) \leq C' r \left(\avint_{\lambda B_V} \left|\grad u(\tilde{x})\right|^p  dm(\tilde{x})\right)^\frac{1}{p}
\end{equation} where $C'$ depends on $C, p,$ and the maximal degree of the graph, as we describe now.
Note that the metric doubling constant of $N \leq C_\mu^4$ implies that the maximal degree of our graph is bounded by $C_\mu^4$. That is, any $\tilde{x} \in V$ has at most $C_\mu^4$ neighboring points.
 Thus, we see that
 \[
 \left|\grad(u(\tilde{a}))\right|^p \approx \int_{\tilde{a} \sim \tilde{b}} \left(\frac{|u(\tilde{b}) - u(\tilde{a})|}{\epsilon} \right)^p
 \]
 with $\approx$ meaning that the two differ by a bounded multiplicative constant. Because of this fact, we may substitute the righthand side of (\ref{dpi}) with the right hand side of (\ref{newdpi}) and absorb this multiplicative constant into the constant from (\ref{dpi}). This final version (\ref{newdpi}) of the discrete Poincar\'e inequality is the one we use. 
 
 We make a note of a difference between the traditional Poincar\'e inequality (\ref{pi}) and this discretized version. If a space supports some $(1,p)$-Poincar\'e inequality for any $p\geq 1$, then a simple topological consequence is that the space is connected. Since we will be working with discrete spaces, it is clear that we may not use the traditional Poincar\'e inequality (\ref{pi}) for these. The discrete graphs in this paper are made of isolated points with positive distance between them. That is to say, these spaces are highly disconnected. However, there are still some properties that we can obtain from our discrete version of the Poincar\'e inequality that are in line with the traditional version. For example, if $u$ is a function on a space, and $u$ has the constant function $0$ as an upper gradient, we would like to conclude that $u$ is a constant function. This is consequence of the space supporting a Poincar\'e inequality in the traditional sense, but it is also a consequence of a discrete space supporting a discrete Poincar\'e inequality.
  

\section{Doubling property of the approximating graph}
Since the underlying space $(X, d_X, \mu)$ is doubling, it is natural to question whether or not the constructed graph shares this property. The aim of this section is to prove the following lemma:
\begin{lem}\label{DoubXV}
Suppose that $(X, d_X, \mu)$ is doubling with constant $C_\mu$, and $X$ is quasiconvex with constant $L$. 
Let A be a maximal $\epsilon$-separated subset of $X$, and let $(V,d_V, m)$ be constructed from $(X,d_X,\mu)$ as before. Then $m$ is a doubling measure on $V$.
\end{lem}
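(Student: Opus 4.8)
The plan is to transfer the doubling property from $\mu$ to $m$ by sandwiching $m(B_V(\tilde x,r))$ between the $\mu$-measures of two $X$-balls whose radii are comparable up to the additive scale $\epsilon$, and then to invoke the doubling of $\mu$. Two ingredients drive everything: the bi-Lipschitz comparison of Proposition~\ref{blipcomp}, which gives $\frac{1}{L+1}d_V(\tilde x,\tilde y)\le d_X(x,y)\le 3d_V(\tilde x,\tilde y)$, together with two elementary facts coming from $A$ being a maximal $\epsilon$-separated set: the balls $\{B_X(y,\epsilon/2)\}_{y\in A}$ are pairwise disjoint, while the balls $\{B_X(y,\epsilon)\}_{y\in A}$ cover $X$.

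First I would prove an upper bound. If $\tilde y\in B_V(\tilde x,r)$ then $d_X(x,y)\le 3d_V(\tilde x,\tilde y)<3r$, so each ball $B_X(y,\epsilon/2)$ contributing to $m(B_V(\tilde x,r))$ lies inside $B_X(x,3r+\epsilon/2)$. Using $\mu(B_X(y,\epsilon))\le C_\mu\mu(B_X(y,\epsilon/2))$ and the disjointness of the $\epsilon/2$-balls, summing yields $m(B_V(\tilde x,r))\le C_\mu\,\mu(B_X(x,3r+\epsilon/2))$. For the matching lower bound I would use maximality: every $z\in B_X(x,\rho)$ lies within $\epsilon$ of some $y\in A$, and that $y$ satisfies $d_V(\tilde x,\tilde y)\le (L+1)d_X(x,y)<(L+1)(\rho+\epsilon)$, so $\tilde y\in B_V(\tilde x,r)$ as soon as $(L+1)(\rho+\epsilon)\le r$. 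Taking $\rho=\frac{r}{L+1}-\epsilon$ gives $B_X(x,\rho)\subset\bigcup_{\tilde y\in B_V(\tilde x,r)}B_X(y,\epsilon)$, whence $\mu(B_X(x,\tfrac{r}{L+1}-\epsilon))\le m(B_V(\tilde x,r))$.

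With both estimates in hand, for radii that are large relative to $\epsilon$---say $r\ge 2(L+1)\epsilon$, which keeps $\frac{r}{L+1}-\epsilon\ge\frac{r}{2(L+1)}$---I would write $\frac{m(B_V(\tilde x,2r))}{m(B_V(\tilde x,r))}\le \frac{C_\mu\,\mu(B_X(x,6r+\epsilon/2))}{\mu(B_X(x,\,r/(L+1)-\epsilon))}$ and bound the right-hand side by a fixed power of $C_\mu$ using the doubling of $\mu$, since the two radii then differ only by a bounded factor depending on $L$. This produces a doubling constant for $m$ depending only on $C_\mu$ and $L$ in the large-radius regime.

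The remaining, and genuinely delicate, case is the small-radius regime $r<2(L+1)\epsilon$, where the lower bound above degenerates because $\frac{r}{L+1}-\epsilon$ may be nonpositive, so the discreteness of $V$ must be exploited directly. In this range $B_V(\tilde x,2r)$ contains only boundedly many vertices: each such $\tilde y$ has $d_X(x,y)<6r\le 12(L+1)\epsilon$, so the relevant points of $A$ are $\epsilon$-separated inside a fixed-radius $X$-ball, and the packing estimate already used in Section~2 bounds their number by a fixed power of $C_\mu$. Each contributes mass $m(\tilde y)=\mu(B_X(y,\epsilon))\le \mu(B_X(x,(12(L+1)+1)\epsilon))\le C_\mu^{\,k}\mu(B_X(x,\epsilon))=C_\mu^{\,k}m(\tilde x)$ for a fixed $k$, while $m(B_V(\tilde x,r))\ge m(\tilde x)$ since $\tilde x\in B_V(\tilde x,r)$. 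Multiplying the point count by this per-point comparison bounds the ratio by a constant depending only on $C_\mu$ and $L$; taking the larger of the two doubling constants finishes the proof. I expect the main obstacle to be precisely this small-scale analysis together with the bookkeeping of the additive $\epsilon$ terms, so that every constant comes out uniform in $x$, $r$, and $\epsilon$.
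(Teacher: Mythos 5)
Your proof is correct, and at its core it runs on the same engine as the paper's: sandwich $m(B_V(\tilde x,r))$ between $\mu$-measures of $X$-balls of comparable radius using Proposition \ref{blipcomp} and the maximality of $A$, then invoke the doubling of $\mu$, with a separate discrete argument at small scales. The differences are in the decomposition and bookkeeping, and they are worth noting. The paper splits at $r=\epsilon$, where $B_V(\tilde x,r)=\{\tilde x\}$, so its small-scale case reduces to a degree bound; for $r\geq\epsilon$ it bounds $m(B_V(\tilde x,2r))$ above via the bounded-overlap constant $C_\mu^4$ of the balls $\{B_X(y,\epsilon)\}_{y\in A}$ (where you instead use disjointness of the $\epsilon/2$-balls plus one extra doubling --- an equivalent device), and bounds below via $\mu\bigl(B_X(x,r/(L+1))\bigr)\leq m(B_V(\tilde x,r))$. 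Your version splits at $r=2(L+1)\epsilon$ and pays for it with a genuine packing/counting argument in the regime $\epsilon \leq r < 2(L+1)\epsilon$, which the paper never needs. What you buy with that extra work is rigor about the additive $\epsilon$: your lower bound $\mu\bigl(B_X(x,r/(L+1)-\epsilon)\bigr)\leq m(B_V(\tilde x,r))$ is the one that maximality actually delivers, since a point $z\in B_X(x,\rho)$ is only within $\epsilon$ of some $y\in A$, giving $d_V(\tilde x,\tilde y)<(L+1)(\rho+\epsilon)$. The paper's corresponding step silently drops this $(L+1)\epsilon$ term (its covering of $B_X(x,r/(L+1))$ uses vertices that need not lie in $B_V(\tilde x,r)$), so your restriction to $r\geq 2(L+1)\epsilon$, where the correction is absorbed into a factor of $2$, together with the counting argument below that threshold, is precisely the patch that makes the sandwich airtight. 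Both routes yield a doubling constant for $m$ depending only on $C_\mu$ and $L$, as required.
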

The space $(V,d_V, m)$ being a doubling space allows us the use of many results of harmonic analysis that extend to doubling spaces. For example, the Lebesgue Differentiation Theorem extends to metric measure spaces under the doubling property. This result is due to the reliance of the Hardy-Littlewood maximal inequality on the doubling property. There are many other extensions that come from having a doubling property. Coifman and Weiss (see \cite{CW}) in particular were pioneers of verifying properties of doubling spaces as related to harmonic analysis. J. Luukkainen and E. Saksman showed that every complete doubling metric space carries a doubling measure (see \cite{LS}). It is worth noting that the assumption of completeness is essential here. For example, Saksman showed that every metric space without isolated points has a dense subset that does not carry a doubling measure (see \cite{S}).  In Section 6 of this paper we will see that having the doubling property is vital to verifying pointed-measured Gromov-Hausdorff convergence  of sequences of discretized metric measure spaces.

Before we begin the proof, we note that due to Proposition \ref{blipcomp},
\begin{equation} A \cap B_X\left(x, \frac{r}{L+1}\right) \subset  B_V(\tilde{x}, r) \subset B_X(x, 3r).  \label{ballLipeq} \end{equation}

\begin{proof}[Proof of Lemma 4.1]
 We must show that there is some constant $C_m \geq 1$ such that for any $\tilde{x}\in V$, and $r>0$, $m(B_V(\tilde{x}, 2r)) \leq C_m\,m(B_V(\tilde{x}, r))$.
Fix $\tilde{x} \in V$. The case where $0< r< \epsilon$ is easily seen due to the uniform bound on the degree of the graph. That is, in this particular case, we have that $m(B_V(\tilde{x}, r)) = m(\{\tilde{x}\}) = \mu(B_X(x, \epsilon))$, and $m(B_V(\tilde{x}, 2r)) \leq m(B_V(\tilde{x}, 2\epsilon))$. We see that
\begin{align*}
m({B_V}(\tilde{x},2\epsilon)) &\leq \sum_{\tilde{y} \sim \tilde{x}} \mu({B_X}(y,\epsilon))\\
& \leq \text{deg}(\tilde{x})\mu(B_X(x,2\epsilon))\\
& \leq C_\mu^4\,\mu(B_X(x,2\epsilon))\\
& \leq C_\mu^5\,\mu(B_X(x,\epsilon)) \\
& = C_\mu^5\,m(B_V(\tilde{x},\epsilon)).
\end{align*}
Thus, we will consider the case where $\epsilon \leq r$. By definition, \[m(B_V(\tilde{x},2r)) = \underset{\tilde{y} \in 2B_V}{\sum} \mu(B_X(y, \epsilon)).\] This may be a problem if the sum is infinite. However, since $X$ is a doubling metric measure space, there exists an $N < \infty$ such that there are at most $N$ points of A, $y_1, y_2, \dots, y_N$, in each ball $B_X(y, 2\epsilon)$. Recall that we verified that $N\leq C_\mu^4$ in Section 2. This fact, along with the assumption that $\mu$ is locally finite bypasses such a problem, and the sum will be finite.   For this calculation set $\alpha = \lceil \log_2 (L+1) \rceil$. For the first inequality below, we use the fact that $B_V(\tilde{x}, 2r) \subset B_X(x, 3(2r + \epsilon))$ by Proposition (\ref{blipcomp}), and the appearance of $C_\mu^4$ is due to the overlapping constant discussed in Section 2.
\begin{align*}
m(B_V(\tilde{x},2r)) = \sum_{\tilde{y} \in 2B_V} \mu(B_X(y,\epsilon)) & \leq C_\mu^4 \mu(B_X(x, 3(2r+\epsilon)))\\
& \leq  C_\mu^{6+2\alpha} \mu\left(B_X\left(x, \frac{2r+\epsilon}{L+1}\right)\right) \\
& \leq C_\mu^{8+2\alpha} \mu\left(B_X \left( x, \frac{r/2 + \epsilon/4}{L+1}\right)\right)  \\
& \leq C_\mu^{8+2\alpha} \mu\left(B_X \left( x, \frac{r}{L+1} \right) \right) \\
& \leq C_\mu^{8+2\alpha} \, \sum_{\tilde{y} \in B_V} \, \mu(B_X(y,  \epsilon)) \\
& = C_\mu^{8+2\alpha} \, m(B_V(\tilde{x}, r))
\end{align*}

Hence, $m$ is a doubling measure on $V$ with doubling constant $C_m = C_\mu^{8+2\alpha}$.
\end{proof}

We also note that the above proof can be easily modified to show the following:
\begin{lem}
\label{mcompmu}
Suppose $(X, d_X,\mu)$ is doubling with constant $C_\mu$, and is quasiconvex with constant $L$. Let $A$ be a maximal $\epsilon$-seperated subset of $X$ and let $(V, d_V,m)$ be constructed as above.  Then for $x \in A$ and $r \geq \epsilon$, there exists a constant $K$ such that
\[ \frac{1}{K} m(B_V(\tilde{x},r)) \leq \mu(B_X(x,r)) \leq K m(B_V(\tilde{x}, r)) \]
where $K$ depends only on $C_\mu$ and $L$.
\end{lem}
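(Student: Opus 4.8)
The plan is to derive both inequalities directly from the defining formula $m(W) = \sum_{\tilde{y} \in W} \mu(B_X(y,\epsilon))$, the bi-Lipschitz comparison of Proposition \ref{blipcomp} (equivalently the ball inclusions (\ref{ballLipeq})), and the doubling of both $\mu$ and $m$, the latter being Lemma \ref{DoubXV}. The structural fact that makes everything fit is precisely the hypothesis $r \geq \epsilon$: it lets me absorb any additive error of size $\epsilon$ into a constant multiple of $r$, so that every auxiliary ball that arises can be compared to $B_X(x,r)$ or $B_V(\tilde{x},r)$ at the cost of a bounded number of doublings. This is exactly the modification of the proof of Lemma \ref{DoubXV} that the remark anticipates.

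For the lower bound on $\mu$ --- that is, the upper bound $m(B_V(\tilde{x},r)) \leq K\,\mu(B_X(x,r))$ --- I would exploit that the centers $y$ of the summands are points of the $\epsilon$-separated set $A$, so the balls $B_X(y,\epsilon/2)$ are pairwise disjoint. Doubling gives $\mu(B_X(y,\epsilon)) \leq C_\mu\,\mu(B_X(y,\epsilon/2))$, and since each $\tilde{y} \in B_V(\tilde{x},r)$ satisfies $d_X(x,y) \leq 3\,d_V(\tilde{x},\tilde{y}) < 3r$, these disjoint half-balls all lie inside $B_X(x,3r+\epsilon/2) \subset B_X(x,4r)$. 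Summing the disjoint pieces and then doubling $\mu$ twice to pass from radius $4r$ back to $r$ yields $m(B_V(\tilde{x},r)) \leq C_\mu\,\mu(B_X(x,4r)) \leq C_\mu^3\,\mu(B_X(x,r))$.

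For the upper bound on $\mu$, I would instead use the maximality of $A$, which gives $X = \bigcup_{y \in A} B_X(y,\epsilon)$, so that $B_X(x,r)$ is covered by those $B_X(y,\epsilon)$ that meet it. Any such $y$ has $d_X(x,y) < r+\epsilon \leq 2r$, whence the first inequality of Proposition \ref{blipcomp} gives $d_V(\tilde{x},\tilde{y}) \leq (L+1)\,d_X(x,y) < 2(L+1)r$; that is, every relevant vertex lies in $B_V(\tilde{x},2(L+1)r)$. Consequently $\mu(B_X(x,r)) \leq \sum_{\tilde{y} \in B_V(\tilde{x},2(L+1)r)} \mu(B_X(y,\epsilon)) = m(B_V(\tilde{x},2(L+1)r))$. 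It then remains only to contract the radius from $2(L+1)r$ back to $r$, which is exactly what the doubling of $m$ from Lemma \ref{DoubXV} provides, at the cost of a factor $C_m^{\lceil \log_2(2(L+1))\rceil}$ depending only on $C_\mu$ and $L$. Taking $K$ to be the larger of the two constants finishes the argument.

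The only real bookkeeping --- and the place where a careless argument would go wrong --- is keeping the two directions of the bi-Lipschitz estimate straight: the lower bound uses $d_X \leq 3\,d_V$ to place the summand-balls inside a slightly larger $X$-ball, while the upper bound uses $d_V \leq (L+1)\,d_X$ to locate the relevant vertices inside a dilated $V$-ball. The hypothesis $r \geq \epsilon$ is what lets me swallow the $\epsilon$-shifts in both places; without it (for $r < \epsilon$) the statement would need a separate treatment, much as the case $0 < r < \epsilon$ is handled at the start of the proof of Lemma \ref{DoubXV}.
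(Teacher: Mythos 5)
Your proof is correct, and it uses the same basic toolkit the paper intends --- the defining sum for $m$, both directions of Proposition \ref{blipcomp}, the separation and maximality of $A$, and doubling --- but it closes the second inequality by a genuinely different route. The paper gives no explicit proof of Lemma \ref{mcompmu}; it asks the reader to modify the chain of inequalities proving Lemma \ref{DoubXV}, in which all radius manipulation happens on the $\mu$ side: there one bounds $m(B_V(\tilde{x},r))$ by $C_\mu^4\,\mu(B_X(x,3(r+\epsilon)))$ via the overlap constant, and for the reverse direction one shrinks $B_X(x,r)$ by $\mu$-doubling down to radius $r/(L+1)$, where the inclusion (\ref{ballLipeq}) together with maximality lets $\mu$ be dominated by the sum defining $m(B_V(\tilde{x},r))$. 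You instead work on the $m$ side for that direction: you cover $B_X(x,r)$ by the $\epsilon$-balls of vertices, locate all such vertices inside the dilated ball $B_V(\tilde{x},2(L+1)r)$, and then contract back to radius $r$ using the $m$-doubling of Lemma \ref{DoubXV} (which is legitimate, as that lemma precedes this one and does not depend on it). Both arguments yield a constant $K$ depending only on $C_\mu$ and $L$, but yours has a small advantage in rigor: the covering step is unambiguous, since every vertex whose $\epsilon$-ball meets $B_X(x,r)$ is certainly captured by the dilated $V$-ball, whereas the paper's route requires a little care with additive $\epsilon$'s to ensure that the $\epsilon$-balls of vertices in $B_V(\tilde{x},r)$ really cover the shrunken $X$-ball. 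Your use of the disjoint half-balls $B_X(y,\epsilon/2)$ plus one application of doubling is simply an unpacked form of the overlap constant $C_\mu^4$ that the paper cites, so that part is the same argument in different clothing.
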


\begin{remark} \label{drmk} In other words, the lemma says that $m$ and $\mu$ are comparable at scales larger than $\epsilon$.
Also, the result holds with a multiple $a>1$ of $r$ though repeated use of the doubling property.  Hence $m(B_V(\tilde{x},r))$ and $\mu(B_X(x,a r))$ are comparable with the constant now depending also on $a$.
\end{remark}

\section{Proof of Theorem \ref{XtoV}}
\begin{proof}
In this section we will show that $V$ supports a Poincar\'{e} inequality in the sense of (\ref{newdpi}). We will do this essentially by transforming a given function $\tilde{f}:V\rightarrow\mathbb{R}$ into a function $f:X\rightarrow \mathbb{R}$ by employing a partition of unity, and using the fact that $X$ supports a Poincar\'{e} inequality in the sense of (\ref{pi}) and then reinterpreting this inequality back to the discrete function $f$. The rest of the proof lies only in checking the details of this sketch.  Let $A$ be the maximally $\epsilon$-separated subset of $X$ that is associated with $V$ (as in Section 2). Fix $a \in A$, and let $\psi_a: X \rightarrow \mathbb{R}$ be given by
\[
\psi_a(x):= \min \left\{ 1, \frac{d_{ X}(x,X\backslash B_{X}(a,2\epsilon))}{\epsilon}\right\}.
 \]
Notice that if $x \in B_{ X}(a, \epsilon)$, then $\psi_a(x) = 1$, and if $x \notin B_{ X}(a, 2\epsilon)$, then $\psi_a(x) = 0$. Let $\phi_a: X\rightarrow \mathbb{R}$ be defined as follows:
\[
\varphi_a(x) := \frac{\psi_a(x)}{\underset{b\in A}{\sum} \psi_b(x)}.
\]
For any $a \in A, \phi_a$ is a Lipschitz function with Lipschitz constant equal to $\frac{C}{\epsilon}$, where $C$ only depends upon the doubling constant $C_\mu$. In fact, we may take $C = 5C_\mu^9$. We see that for any $x \in X$,
\[
\underset{a\in A}{\sum} \phi_a(x)=1.
 \]
 We define $f: X \rightarrow \mathbb{R}$ by :
\[
f(x): = \sum_{a \in A}\tilde{f}(\tilde{a})\phi_a(x).
\]
We consider the pointwise upper Lipschitz constant function on $X$ defined by
\[
\Lip f(x):= \limsup_{r\rightarrow 0} \sup_{y \in B_X(x,r)} \frac{|f(x) - f(y)|}{r}
\]

It can be shown that $\Lip f(x)$ is an upper gradient of $f$ provided that $f$ is locally Lipschitz (see Theorem 6.1 in \cite{C}). It clear from the construction that $f$ is locally Lipshitz. We will show that for all $a \in A$ such that $x \in B_{X}(a,\epsilon)$ ,
\[
\Lip f(x) \leq C  \sum_{\tilde{a} \sim \tilde{b}} \frac{|\tilde{f}(\tilde{a}) - \tilde{f}(\tilde{b})|}{\epsilon} = C \left|\grad \tilde{f}(\tilde{a})\right|,
\]
where $\frac{C}{\epsilon}$ is the Lipschitz constant of the $\phi_a$ functions. From this we glean a lower bound for the right half of (\ref{newdpi}).

By the maximality of $A$, for any $x \in X$ there is some $a_0 \in A$ such that $x \in B_{X}(a_0, \epsilon)$. Since this ball is open,
we assume that $r$ is small enough such that we may only consider points $y \in B_{X}(x, r)\subset B_{X}(a_0,\epsilon)$, i.e. $r< \frac{\epsilon - d_X(x, a_0)}{2}$. Let $D_x = \{a\in A: d_X(x,a) < 2\epsilon \}$ and $D_y = \{ a \in A :  d_X(y,a) < 2\epsilon \}$. Let $D = D_x \cup D_y$, which ultimately depends on $y$, and note that $D \subset \{a \in A :  d_X(a, a_0) < 3\epsilon \}$. We now show a useful pointwise bound for Lip $f(x)$. Observe that if $a \in A\backslash D_x$ then $\phi_a(x) = 0$ and if $a\in A\backslash D_y$ then $\phi_a(y) = 0$. Hence,
\begin{align*}
\frac{|f(x) - f(y)|}{r} & = \frac{1}{r} \left|\sum_{a\in D_{x}} \tilde{f}(\tilde{a})\phi_a(x) - \sum_{a\in D_{y}} \tilde{f}(\tilde{a})\phi_a(y) \right|\\
& = \frac{1}{r} \left| \sum_{a\in D} \tilde{f}(\tilde{a})\phi_a(x) - \sum_{a\in D}\tilde{f}(\tilde{a})\phi_a(y) - \sum_{a\in D} \tilde{f}(\tilde{a}_0)\phi_a(x) + \sum_{a \in D}\tilde{f}(\tilde{a}_0)\phi_a(y) \right| \\
\end{align*}
The equality in the second line is due to the fact that $\sum_{a\in D} \phi_a(x) = 1 = \sum_{a\in D}\phi_a(y)$.  After grouping like terms from the above, we continue:
\begin{align*}
\frac{|f(x) - f(y)|}{r} =  &\frac{1}{r} \left| \sum_{a \in D} \tilde{f}(\tilde{a})(\phi_a(x) - \phi_a(y)) - \sum_{a \in D} \tilde{f}(\tilde{a}_0)(\phi_a(x) - \phi_a(y)) \right| \\
 = &\frac{1}{r} \left| \sum_{a \in D} (\tilde{f}(\tilde{a}) - \tilde{f}(\tilde{a}_0))(\phi_a(x) - \phi_a(y)) \right| \\
 \leq &\frac{C}{r\epsilon} \sum_{a \in D} | \tilde{f}(\tilde{a}) - \tilde{f}(\tilde{a}_0) |  d_X(x,y) \\
 \leq &\frac{C}{\epsilon} \sum_{\tilde{a} \sim \tilde{a}_0} |\tilde{f}(\tilde{a})  - \tilde{f}(\tilde{a}_0)|.
\end{align*}

We may now conclude that if $x \in B(a_0, \epsilon)$ for some $a_0 \in A$, then there is a constant $C$ that depends only on $C_\mu$ such that
\begin{equation}
\label{pwgineq} \Lip f(x) \leq C \left|\grad \tilde{f}({\tilde{a}_0})\right|.
\end{equation}
We use this pointwise estimate to compare $L^p$ estimates of the gradients, in preparation for the Poincar\'e inequality.  Let $B_X(x,r)$ be a ball in $X$.  Using the results of Proposition \ref{blipcomp} and (\ref{ballLipeq}), we see that
\begin{equation}
\begin{aligned}
\int_{B_X(x,r)} (\Lip f)^p \,d\mu
& \leq \sum_{a \in A \cap B_X(x,r+\epsilon)} \left( \int_{B_X(a,\epsilon)} (\Lip f)^p \,d\mu \right)\\
& \leq  C^p \sum_{a \in A \cap B_X(x,r+\epsilon)} \left( \int_{B_X(a, \epsilon)}  \left|\grad \tilde{f}({\tilde{a}})\right|^p \, d\mu \right)\\
& =  C^p \sum_{a \in A \cap B_X(x,r+\epsilon)} \left|\grad \tilde{f}({\tilde{a}})\right|^p m(\tilde{a})\\
& \leq C^p \int_{B_V(\tilde{a}_0, (L+1)(r+2\epsilon))}\left|\grad \tilde{f}({\tilde{a}})\right|^p dm(\tilde{a}).
\end{aligned}
\label{LipGradInt}
\end{equation}

 With the above we now approach (\ref{newdpi}). Let $a_0 \in A$ be a nearest point to $x$ in $A$. Note that if $r<\epsilon$ the discrete Poincar\'e inequality (\ref{newdpi}) is trivially valid, so we can now say for all $r>0$ {by Lemma \ref{mcompmu}}, 
\begin{equation}
\label{ftildefPI}
\dashint_{B_{X}(x,r)} |f - f_{B_{{X}}(x,r)}|\, d\mu \leq Cr\left( {\avint_{B_V(\tilde{a}_0, {6\lambda}Lr)}\left|\grad \tilde{f}(\tilde{x})\right|^p dm(\tilde{x}) }\right)^\frac{1}{p} .
\end{equation}
Notice the radius for the average integral on the right hand side is $6\lambda Lr = (3\cdot 2L)\lambda r$. The $3$ appears from the assumption that $r \geq \epsilon$, and $2L$ appears from both the fact that $L \geq 1$ and from the constant in Lemma \ref{mcompmu}.

We now wish to verify the remaining part of the Poincar\'{e} inequality, i.e. replacing the left hand side of the above inequality with one related to the discrete function $\tilde f$. Instead of looking for the left hand side of (\ref{newdpi}) above, we search for {
\begin{equation} \label{AltPI}
\avint_{\tilde{z}\in B_V} \avint_{\tilde{w}\in B_V} |\tilde{f}(\tilde{z}) - \tilde{f}(\tilde{w})| dm(\tilde{w})dm(\tilde{z}).
\end{equation}}
This is a valid substitution since for any metric measure space $(X, d_X, \mu)$, and any measurable function $u$, the following two properties hold:
\begin{align*}
\dashint_B\dashint_B |u(x) - u(y)| d\mu(y)d\mu(x) &= \dashint_B\dashint_B |u(x) - u_B + u_B - u(y)| d\mu(y) d\mu(x) \\
& \leq 2\dashint_B |u(x) - u_B|d\mu(x), \\
\text{and} \ \ \ \dashint_B|u(x) - u_B| d\mu(x) & = \dashint_B\left|\dashint_B (u(x) - u(y))d\mu(y) \right| d\mu(x) \\
& \leq \dashint_B\dashint_B |u(x) - u(y)| d\mu(y)d\mu(x).
\end{align*}

Given $\tilde{a} \in V$ and $r> 0$, we look at the ball $B_V(\tilde{a}, r) \subset V.$ We fix two points $\tilde{z}, \tilde{w} \in B_V(\tilde{a}, r).$ We note that $B_X(z, \frac{\epsilon}{2}) \cap A = \{z\} \subset X$, and $B_X(w, \frac{\epsilon}{2})~\cap~A =\{w\}\subset X,$ by the { $\epsilon$-seperability} of $A$. Let $x,y$ be elements of $B_X(z, \frac{\epsilon}{2})$, and $B_X(w, \frac{\epsilon}{2})$ respectively. Recalling a useful fact about the $\phi$ functions from the partitions of unity, we may write
\begin{align*}
\tilde{f}(\tilde{z}) & = \sum_{b \in A} \tilde{f}(\tilde{z})\phi_b(x) \\
& = \sum_{b \in A}\tilde{f}(\tilde{z})\phi_b(x) + \left( f(x) - \sum_{b\in A} \tilde{f}(\tilde{b})\phi_b(x) \right)\\
& = \sum_{b \in A} (\tilde{f}(\tilde{z}) - \tilde{f}(\tilde{b}))\phi_b(x) + f(x).
\end{align*}
Similarly, we write
\[
\tilde{f}(\tilde{w}) = \sum_{b\in A}(\tilde{f}(\tilde{w}) - \tilde{f}(\tilde{b}))\phi_b(y) + f(y).
\]
Thus,
\begin{align*}
|\tilde{f}(\tilde{z}) - \tilde{f}(\tilde{w})| & = \left|\sum_{b\in A}(\tilde{f}(\tilde{z}) - \tilde{f}(\tilde{b})) \phi_b(x) + f(x) - \sum_{b\in A}(\tilde{f}(\tilde{w}) - \tilde{f}(\tilde{b}))\phi_b(y) - f(y)\right| \\
& \leq |f(x) - f(y)| + \sum_{b\in A} |\tilde{f}(\tilde{z}) - \tilde{f}(\tilde{b})|\phi_b(x) + \sum_{b \in A} |\tilde{f}(\tilde{w}) - \tilde{f}(\tilde{b})|\phi_b(y).
\end{align*}

Since $\phi_b(x) = 0$ whenever $x \notin B_X(b, 2\epsilon)$, and because $x~\in~B_X(z,\epsilon/2)$, then the sum from the second term can be taken over all $b\in A$ such that $d_X(b,z)~<~\frac{5\epsilon}{2}$, which means that we may instead just sum over neighbors:

\[ |\tilde{f}(\tilde{z})-\tilde{f}(\tilde{w})| \leq |f(x)-f(y)| + \sum_{\tilde{b} \sim \tilde{z}} |\tilde{f}(\tilde{z})-\tilde{f}(\tilde{b})| \phi_b(x) + \sum_{\tilde{b} \sim \tilde{w}} |\tilde{f}(\tilde{w})-\tilde{f}(\tilde{b})| \phi_b(y) \]

We now turn our sights back onto the double sum form of the left hand side of the Poincar\'{e} inequality. Using the above comparisons, and recalling that $x := x_{\tilde{z}}$ and $y:= y_{\tilde{w}}$ depend on $\tilde{z}$ and $\tilde{w}$, respectively,  we see that {
\begin{equation}
	\begin{aligned}
		 \int_{B_V} \int_{B_V} |\tilde{f}(\tilde{z}) - & \tilde{f}(\tilde{w})|  dm(\tilde{w})dm(\tilde{z}) \\
		& \leq \int_{B_V} \int_{B_V} |f(x_{\tilde{z}}) - f(y_{\tilde{w}})|dm(\tilde{w})dm(\tilde{z}) \\
		&\hspace{.5cm}+  \int_{B_V} \int_{B_V}\sum_{\tilde{z}\sim\tilde{b}} |\tilde{f}(\tilde{z}) - \tilde{f}(\tilde{b})|dm(\tilde{w})dm(\tilde{z})  \\
		&\hspace{.5cm}+  \int_{B_V} \int_{B_V}\sum_{\tilde{w}\sim\tilde{b}} |\tilde{f}(\tilde{w}) - \tilde{f}(\tilde{b})|dm(\tilde{w})dm(\tilde{z}).
	\end{aligned}
\label{3partsnoav}
\end{equation}}

We work with these terms on the right hand side separately, first with
\begin{equation}
\label{I} \int_{B_V} \int_{B_V} |f(x_{\tilde{z}}) - f(y_{\tilde{w}})|dm(\tilde{w})dm(\tilde{z}).
\end{equation}

By using the doubling property in the third line and the results of (\ref{blipcomp}) in the last line, we see that
\begin{align*}
 \int_{B_V} & \int_{B_V} |f(x_{\tilde{z}}) - f(y_{\tilde{w}})| dm(\tilde{w})dm(\tilde{z}) \\
 = &\sum_{\tilde{z} \in B_V} \sum_{\tilde{w} \in B_V} |f(x_{\tilde{z}}) - f(y_{\tilde{w}})|\mu(B(w, \epsilon))\mu(B(z,\epsilon)) \\
 \leq & C_\mu^2 \sum_{\tilde{z} \in B_V} \sum_{\tilde{w} \in B_V} |f(x_{\tilde{z}}) - f(y_{\tilde{w}})|\mu(B(w, \epsilon/2))\mu(B(z,\epsilon/2)) \\
 \leq &C_\mu^2 \sum_{\tilde{z}, \tilde{w} \in B_V} \int_{B_X(z, \frac{\epsilon}{2})} \int_{B_X(w,\frac{\epsilon}{2})} |f(x) - f(y)| \chi_{B(w,\frac{\epsilon}{2})}(y) \chi_{B(z,\frac{\epsilon}{2})}(x) d\mu(y) d\mu(x) \\
 \leq& C_\mu^2 \int_{B_X(a, (L+2)r)} \int_{B_X(a, (L+2)r)} |f(x) - f(y)| d\mu(y) d\mu(x),
\end{align*}
{where $\chi_A$ as usual stands for the characteristic function of $A \subset X$.}
Now for  the second term of our inequality from (\ref{3partsnoav}):
\begin{align*}
 \int_{B_V} \int_{B_V} \sum_{\tilde{b} \sim \tilde{z}} | & \tf(\tilde{z}) - \tf(\tilde{b})| dm(\tilde{w})dm(\tilde{z}) \\
= &\int_{B_V} \left[ \int_{B_V} \sum_{\tilde{z} \sim \tilde{b}} |\tf(\tilde{z}) - \tf(\tilde{b})| dm(\tilde{z}) \right] dm(\tilde{w}) \\
= &\epsilon \cdot \int_{B_V} \left[ \int_{B_V} \left|\grad \tf(\tilde{z})\right| dm(\tilde{z}) \right] dm(\tilde{w}) \\
= &\epsilon \cdot m(B_V(\tilde{a},r)) \int_{B_V} \left|\grad \tf(\tilde{z})\right| dm(\tilde{z})
\end{align*}

Clearly the same {quantity} can be used to bound the third term of the summation by transposing $\tilde{z}$ with $\tilde{w}$. Summarizing, from (\ref{3partsnoav}) we achieve
\begin{equation*}
	\begin{aligned}
\int_{B_V} \int_{B_V} |\tilde{f}(\tilde{z}) &- \tilde{f}(\tilde{w})|  dm(\tilde{w})dm(\tilde{z}) \leq\\
& C_\mu^2 \int_{B_X(a, (L+2)r)} \int_{B_X(a, (L+2)r)} |f(x) - f(y)| d\mu(y) d\mu(x)\\
& + 2 \epsilon \cdot m(B_V(\tilde{a},r)) \int_{B_V} \left|\grad \tf(\tilde{z})\right| dm(\tilde{z})
	\end{aligned}
\end{equation*}
By Lemma \ref{mcompmu} and {Remark \ref{drmk}} we are free to average all these integrals to obtain
\begin{equation}
	\begin{aligned}
\avint_{B_V} \avint_{B_V} |\tilde{f}(\tilde{z}) &- \tilde{f}(\tilde{w})|  dm(\tilde{w})dm(\tilde{z}) \leq\\
& C \avint_{B_X(a, (L+2)r)} \avint_{B_X(a, (L+2)r)} |f(x) - f(y)| d\mu(y) d\mu(x)\\
& + 2 \epsilon \avint_{B_V} \left|\grad \tf(\tilde{z})\right| dm(\tilde{z}),
	\end{aligned}
	\label{2partsav}
\end{equation}
where $C$ is a constant that depends only on $C_\mu$.
We now apply the Poincar\'e inequality version (\ref{ftildefPI}) on the first term on the right-hand side of inequality (\ref{2partsav}). Recalling the discussion after (\ref{AltPI}), we achieve
\begin{equation*}
	\begin{aligned}
\avint_{B_V} \avint_{B_V} |\tilde{f}(\tilde{z}) &- \tilde{f}(\tilde{w})|  dm(\tilde{w})dm(\tilde{z}) \leq\\
& C_1 (L+2)r \left(\avint_{B_V(a, 6\lambda L(L+2)r)} \left|\grad \tilde{f}(\tilde{x})\right|^p dm(\tilde{x}) \right)^{1/p} \\
& + 2 \epsilon \avint_{B_V} \left|\grad \tf(\tilde{z})\right| dm(\tilde{z}),
	\end{aligned}
\end{equation*}
for some constant $C_1$ depending on the data of the Poincar\'{e} inequality and $C_\mu$. Now, by employing H\"older's inequality and the assumption that $\epsilon < r$ on the second term on the right hand side we finally conclude:
\begin{equation*}
\avint_{B_V} \avint_{B_V} |\tilde{f}(\tilde{z}) - \tilde{f}( \tilde{w})|  dm(\tilde{w})dm(\tilde{z}) \leq C_2 r \left( \avint_{\lambda_1 B_V} \left|\grad \tilde{f}\right|^p dm \right)^{1/p}.
\end{equation*}
This is the desired Poincar\'e inequality.  The constants $C_2$ and $\lambda_1$ ultimately depend only on the data of the Poincar\'e inequality and the doubling constant of $X$.
Along with Section 4, the above shows that if a metric measure space, $(X, d_X, \mu)$, supports a $(1,p)$-Poincar\'{e} inequality and is doubling, then the discretization, $(V, d_V, m)$, supports a discrete $(1,p)$-Poincar\'{e} inequality and is doubling and Theorem \ref{XtoV} is proved.
\end{proof}

\section{Pointed Measured Gromov-Hausdorff convergence}

We would like to answer the converse question to what we have shown: If we have a sequence of graphs that support a Poincar\'{e} inequality, have a doubling measure, and ``converge'' to some base space $X$, does $X$ also support a Poincar\'{e} inequality and have a doubling measure $\mu$? To answer this question, we must of course discuss the sense of convergence. In this section we introduce definitions and results about Gromov-Hausdorff convergence that will aid in the answer to the above question. A reader well-versed in the Gromov-Hausdorff topology may safely skip the discussion below and pick up again at Theorems \ref{Cheeg1} and \ref{GHconv}.  For in-depth discussions of Gromov-Hausdorff convergence from a geometric point of view see the book \cite{BB} by D. Burago, Y. Burago, and S. Ivanov.  The upcoming book \cite{HKST} by J. Heinonen, P. Koskela, N. Shanmugalingam, and J. Tyson also discusses the Gromov-Hausdorff topology but from an analytic point of view.

We say that $(X,d,q)$ is a \textit{pointed metric space} if $(X,d)$ is a metric space, and $q\in X$. Let $A$ be a subset of $X$ and fix $\epsilon > 0$. We define the $\epsilon$-\textit{neighborhood} of A as the set
\[
N_\epsilon(A) := \{x\in X | d(x, A) < \epsilon\} = \bigcup_{a\in A} B(a, \epsilon).
\]\label{epsnbhd}

\begin{deff} \label{Hdist} Given a metric space $(Z,d)$ and $\epsilon > 0$, we define the {\textit{Hausdorff distance}} in $Z$ between non-empty sets $A,B \subset Z$  by
\[
d_H^Z(A,B) := \inf\{\epsilon > 0 : A\subset N_\epsilon(B) \text{ and } B \subset N_\epsilon(A)\}.
\]\end{deff}
Notice that if $A$ is a maximal $\epsilon$-net of $Z$, then $d_H^Z(A,Z) \leq \epsilon$.
\begin{deff}
A sequence of pointed separable metric spaces
\[
(X_1, d_1, q_1), (X_2, d_2, q_2)...
\]
is said to \textit{pointed Gromov-Hausdorff} converge to a pointed separable metric space $(X,d,q)$ if for each $r, \eta$ such that $0< \eta < r$, there exists a positive integer $i_0$ such that for each $i\geq i_0$ there exists a map $f_i: B(q_i, r) \rightarrow X$ that satisfies the following:
\begin{enumerate}
\item $f_i(q_i) = q$;
\item $|d(f_i(x), f_i(y)) - d_i(x,y)| < \eta$ for all $x,y \in B(q_i, r)$;
\item $B(q,r-\eta) \subset N_\eta(f_i(B(q_i,r)))$.
\end{enumerate}
\end{deff}
In this definition, the maps $f_i$ are dependent on $\eta$ but not required to be even continuous. We denote this convergence by $(X_i, d_i, q_i) \overset{GH}{\rightarrow} (X, d, q)$.

Let $(\mu_i)_{i=1}^\infty$ be a sequence of locally finite Borel measures on a metric space $X$. If for every boundedly supported continuous function $\phi: X \rightarrow \mathbb{R}$
\begin{align*}
\int_X \phi d\mu_i \rightarrow \int_X \phi d\mu
\end{align*}
as  $i \rightarrow \infty$, then the sequence $(\mu_i)$ is said to \textit{weak*,} or \textit{weak-star, converge} to $\mu$. We denote this convergence by $\mu_i \overset{*}{\rightharpoonup} \mu$. Suppose that $(X, \mu)$ is a metric measure space, and $Y$ is a metric space. Given a function $f: X \rightarrow Y$ we may define the \textit{push forward measure}, $f_{\#}\mu$, as follows. For $A \subset Y$ we let $f_{\#}\mu(A) := \mu(f^{-1}(A))$.

Let $(X_i, d_i, q_i, \mu_i)_{i=1}^{\infty}$ be a sequence of pointed metric measure spaces. If $(X_i, d_i, q_i)$ pointed Gromov Hausdorff converges to $(X,d, q)$ and if for every $r>0$ there exist isometric embeddings $\iota_i: \overline{B}(a_i,r) \rightarrow \ell^\infty$ such that $d_H^{\ell^\infty}(\iota_i(\overline{B}(q_i,r)), \iota(\overline{B}(q,r))) \rightarrow 0$ and $(\iota_i)_\#\mu_i\lfloor\overline{B}(q_i,r) \overset{*}{\rightharpoonup} \iota_\#\mu\lfloor\overline{B}(q,r)$ as measures on $\ell^\infty$, then we say that $(X, d, \mu)$ is a  \textit{measured Gromov-Hausdorff limit} of the sequence $(X_i, d_i, \mu_i)_{i=1}^{\infty}$. We denote this convergence by
\[
(X_i, d_i, q_i, \mu_i) \overset{GH}{\rightarrow} (X,d, q, \mu).
\]
 We note that in the context of this paper, each $X_i$ will be separable, and so by the work of M. Fr\'{e}chet (see \cite{F}), there will always exist some isometric embedding from $X_i$ to $\ell^\infty.$ 

With these definitions in place, we now present two theorems that will help us answer {the} guiding questions presented at the beginning of this section.
\begin{thm}[\cite{C},Theorem 9.1] \label{Cheeg1}
Let $(X_i, d_i, q_i, \mu_i)$ be a sequence of complete spaces which pointed measured Gromov-Hausdorff converge to a complete space $(X,d,q,\mu)$. If each of the measures $\mu_i$ is doubling with constant $C_D$, then $\mu$ is also doubling with constant $C_D$.
\end{thm}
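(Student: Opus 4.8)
The plan is to transfer the doubling inequality one ball at a time through the embedded picture in $\ell^\infty$, using the one-sided semicontinuity of weak-* limits, and then to recover the \emph{exact} constant $C_D$ by a limiting argument in an auxiliary radius. Fix $x \in X$ and $\rho > 0$; I would first choose $r$ so large that $\overline{B}(x,2\rho) \subset B(q,r/2)$ and pass to the finite measures $\nu_i := (\iota_i)_\#\mu_i\lfloor\overline{B}(q_i,r)$ and $\nu := \iota_\#\mu\lfloor\overline{B}(q,r)$ on $\ell^\infty$, for which $\nu_i \overset{*}{\rightharpoonup}\nu$ holds by hypothesis. Since $X$ and each $X_i$ are complete and doubling, hence proper, the supports $\iota_i(\overline{B}(q_i,r))$ are compact, and by $d_H^{\ell^\infty}(\iota_i(\overline{B}(q_i,r)),\iota(\overline{B}(q,r)))\to0$ they lie in a common compact subset of $\ell^\infty$. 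Consequently no mass escapes, the total masses converge, and the standard portmanteau inequalities apply: $\nu(U)\le\liminf_i\nu_i(U)$ for open $U$ and $\nu(F)\ge\limsup_i\nu_i(F)$ for closed $F$. (The balls I evaluate sit well inside $B(q_i,r)$, so the restriction to $\overline{B}(q_i,r)$ is immaterial.)

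Next I would fix the approximating centers: by the Hausdorff convergence there are $x_i\in\overline{B}(q_i,r)$ with $e_i := |\iota_i(x_i)-\iota(x)|_{\ell^\infty}\to0$, and since the embeddings are isometric, balls around $x_i$ correspond, up to the error $e_i$, to $\ell^\infty$-balls around $\iota(x)$. Fix a small $\theta>0$. Lower semicontinuity applied to the open ball $B_{\ell^\infty}(\iota(x),2\rho)$, together with the inclusion $B_{\ell^\infty}(\iota(x),2\rho)\cap\iota_i(X_i)\subset\iota_i(B_{X_i}(x_i,2\rho+\theta))$ (valid once $e_i<\theta$), gives
\[
\mu(B(x,2\rho)) \;\le\; \liminf_i \mu_i\bigl(B_{X_i}(x_i,2\rho+\theta)\bigr).
\]
I would then invoke the doubling property of $\mu_i$ at radius $\rho+\tfrac{\theta}{2}$, writing $2\rho+\theta = 2(\rho+\tfrac{\theta}{2})$, to replace the right-hand side by $C_D\,\mu_i(B_{X_i}(x_i,\rho+\tfrac{\theta}{2}))$. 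Finally, upper semicontinuity applied to the closed ball $\overline{B}_{\ell^\infty}(\iota(x),\rho+\theta)$, with $\iota_i(B_{X_i}(x_i,\rho+\tfrac{\theta}{2}))\subset\overline{B}_{\ell^\infty}(\iota(x),\rho+\theta)$ for $e_i<\tfrac{\theta}{2}$, yields $\limsup_i\mu_i(B_{X_i}(x_i,\rho+\tfrac{\theta}{2}))\le\mu(\overline{B}(x,\rho+\theta))$. Chaining the three bounds produces $\mu(B(x,2\rho))\le C_D\,\mu(\overline{B}(x,\rho+\theta))$ for every $\theta>0$.

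To conclude, I would let $\theta\downarrow0$ and use continuity of $\mu$ along $\overline{B}(x,\rho+\theta)\downarrow\overline{B}(x,\rho)$ to obtain the closed-ball inequality $\mu(B(x,2\rho))\le C_D\,\mu(\overline{B}(x,\rho))$; a short monotonicity argument then upgrades this to the open-ball form with the same constant, since for $\rho'<\rho$ one has $\mu(B(x,2\rho'))\le C_D\,\mu(\overline{B}(x,\rho'))\le C_D\,\mu(B(x,\rho))$, and letting $\rho'\uparrow\rho$ with continuity from below gives $\mu(B(x,2\rho))\le C_D\,\mu(B(x,\rho))$. As $x\in X$ and $\rho>0$ were arbitrary and $r$ may be taken as large as needed, $\mu$ is doubling with constant $C_D$. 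The main obstacle is exactly the preservation of the \emph{same} constant $C_D$: weak-* convergence controls open sets only from below and closed sets only from above, so the comparison radii are unavoidably perturbed by $\theta$ and $e_i$; the argument must be arranged so that the doubling inequality of $\mu_i$ is applied at the matched radii $2(\rho+\tfrac{\theta}{2})$ and $\rho+\tfrac{\theta}{2}$, and all perturbations are sent to zero \emph{after} the doubling step rather than before.
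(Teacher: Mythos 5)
The paper does not prove this statement at all: it is imported verbatim as Theorem 9.1 of Cheeger \cite{C} (the theorem label even carries the citation), so there is no internal proof to compare yours against. Taken on its own terms, your argument is correct and is the standard route to the cited result: push everything into $\ell^\infty$, use the two one-sided portmanteau inequalities for the weak-* convergent restricted measures (open sets from below, closed sets from above), and sandwich the doubling inequality of $\mu_i$ between them. The genuinely delicate point --- obtaining the \emph{same} constant $C_D$ rather than $C_D$ up to a factor --- is exactly where such arguments usually degrade, and you handle it correctly: doubling is applied at the matched radii $2(\rho+\tfrac{\theta}{2})$ and $\rho+\tfrac{\theta}{2}$, and the perturbations $\theta$, $e_i$ are removed only afterwards, by continuity from above along closed balls and then an upgrade to open balls by continuity from below.

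Two routine points deserve a sentence each in a full write-up. First, in this paper's conventions a doubling measure must give every ball positive and finite mass, so besides the inequality you should record that local finiteness of $\mu$ is implicit in the weak-* hypothesis (the masses $\nu_i(\ell^\infty)$ are uniformly bounded for large $i$ by testing against a function equal to $1$ near the compact limit support), and that positivity follows from the inequality itself: a single $\mu$-null ball would force $\mu \equiv 0$ by iterating $\mu(B(x,2^k\rho)) \le C_D^k\,\mu(B(x,\rho))$, contradicting nontriviality. Second, your upper-semicontinuity step silently requires $B_{X_i}(x_i,\rho+\tfrac{\theta}{2}) \subset \overline{B}(q_i,r)$; otherwise $\nu_i$ cannot see all of that ball's mass and the inequality $\mu_i(B_{X_i}(x_i,\rho+\tfrac{\theta}{2})) \le \nu_i\bigl(\overline{B}_{\ell^\infty}(\iota(x),\rho+\theta)\bigr)$ can fail. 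This containment needs the base points to correspond under the embeddings ($\iota_i(q_i)$ close to $\iota(q)$), which is implicit in any reasonable formulation of pointed measured convergence (and in the paper's maps $f_i$ with $f_i(q_i)=q$) but should be invoked explicitly where you say the balls ``sit well inside'' $B(q_i,r)$. Neither point threatens the structure of your proof.
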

The second theorem that will be of great importance was proved independently by J. Cheeger \cite{C} and S. Keith \cite{Kt}.
\begin{thm}[\cite{C},Theorem 9.6]
\label{GHconv}Let $(X_i, d_i, q_i, \mu_i)$ be a sequence of complete spaces that pointed measured Gromov-Hausdorff converge to a complete space $(X,d,q,\mu)$. Let $1\leq p<\infty, C_D, C_p < \infty$ and $\lambda \geq 1$ be fixed. If each of the measures $\mu_i$ is doubling with constant $C_D$, and each space $(X_i, d_i, \mu_i)$ satisfies the $(1,p)$-Poincar\'{e} inequality with constants $C_p$ and $\lambda$, then $(X, d, \mu)$ also satisfies the $(1,p)$-Poincar\'{e} inequality with constants $C'_p$ and $\lambda'$ depending only on $p, C_p$ and $C_D$.
\end{thm}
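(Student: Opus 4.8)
The plan is to reduce the upper-gradient Poincar\'e inequality to a purely metric, Lipschitz reformulation that is stable under the approximation, and then to transfer that reformulation from the spaces $X_i$ to the limit $X$. By the self-improvement results of Cheeger \cite{C} and Keith \cite{Kt}, on a complete doubling space the full $(1,p)$-Poincar\'e inequality~(\ref{pi}) is equivalent to the same inequality restricted to Lipschitz functions $u$ using the pointwise Lipschitz constant $\Lip u$ in place of an arbitrary upper gradient $\rho$. The point of this reduction is that $\Lip u$ is a genuinely local quantity read off from the metric near each point, whereas a general upper gradient is defined through rectifiable curves and so need not survive a Gromov--Hausdorff limit, where such curves may degenerate. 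It therefore suffices to establish
\begin{equation*}
\dashint_B |u - u_B|\, d\mu \le C_p' \, r \left( \dashint_{\lambda' B} (\Lip u)^p\, d\mu \right)^{1/p}
\end{equation*}
for every Lipschitz $u \colon X \to \R$ and every ball $B = B(x,r) \subset X$.

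Fix such a $\Lambda$-Lipschitz $u$, a ball $B$, and a small $\eta > 0$. Using the hypothesis of pointed measured Gromov--Hausdorff convergence, I would, for all large $i$, invoke the $\eta$-approximate isometries $f_i \colon B(q_i,R) \to X$ supplied by the definition together with the isometric embeddings $\iota_i$ into $\ell^\infty$ along which $(\iota_i)_\#\mu_i \overset{*}{\rightharpoonup} \iota_\#\mu$; here $R$ is chosen large enough that $B$ and $\lambda B$ lie in the range of $f_i$. Setting $u_i := u \circ f_i$ produces a function on a ball $B_i \subset X_i$ corresponding to $B$, and since $f_i$ distorts distances by at most $\eta$, the function $u_i$ is $(\Lambda + o(1))$-Lipschitz; after a regularization at scale $\eta$ it satisfies $\Lip u_i(a) \le \Lip u(f_i(a)) + o(1)$ off a set of small $\mu_i$-measure. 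Applying the Lipschitz form of the Poincar\'e inequality available on $X_i$ then gives
\begin{equation*}
\dashint_{B_i} |u_i - (u_i)_{B_i}|\, d\mu_i \le C_p \, r \left( \dashint_{\lambda B_i} (\Lip u_i)^p\, d\mu_i \right)^{1/p}.
\end{equation*}

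It remains to let $i \to \infty$ and then $\eta \to 0$. The weak-$*$ convergence of the pushforward measures identifies the two averaged integrals with their counterparts over $B$ and $\lambda B$ in $X$, while the uniform doubling bound---and with it Theorem~\ref{Cheeg1}, which makes $\mu$ doubling---rules out concentration or loss of mass near $\partial B$ and provides the uniform integrability needed to pass the modulus and the $p$-th power through the limit. Since $\mu(\partial B) = 0$ for all but countably many radii, one first obtains the inequality for a.e.\ $r$ and then for every $r$ by monotonicity, recovering~(\ref{pi}) in its Lipschitz form on $X$ with constants depending only on $p$, $C_p$, and $C_D$; the reduction of the first paragraph then upgrades this to the full inequality. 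I expect the crux to be the two local-to-global steps: bounding $\Lip u_i$ by $\Lip u$ when the maps $f_i$ are only approximately isometric and need not even be continuous, and justifying the Lipschitz reduction itself, namely that on complete doubling spaces the Lipschitz Poincar\'e inequality self-improves to the upper-gradient inequality. Everything else is controlled uniformly by the doubling and Poincar\'e data, which is exactly why $C_p'$ and $\lambda'$ depend only on $p$, $C_p$, and $C_D$.
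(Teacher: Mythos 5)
First, a point of orientation: the paper does not prove this statement at all---it is imported as a black box, attributed to Cheeger \cite{C} and Keith \cite{Kt}, with \cite{HKST} (Chapter 11) cited for removing the length-space hypothesis. So your attempt must stand on its own, and it does not: the step you yourself flag as ``the crux''---the pointwise bound $\Lip u_i(a) \le \Lip u(f_i(a)) + o(1)$ off a set of small measure after regularization---is not merely left unproven, it is false. Pointed measured Gromov--Hausdorff convergence controls distances only up to an additive error $\eta$, hence gives no information at scales below $\eta$, whereas $\Lip$ is an infinitesimal quantity. Concretely, take $X = X_i = \R$ with Lebesgue measure, let $u$ be a smoothed staircase which is constant on nine tenths of each period of length $\eta$ and rises by $\eta/2$ on the remaining tenth, and let $f_i$ round each point to the center of the flat piece containing it; this $f_i$ is a legitimate $\eta$-approximate isometry, and $\Lip u(f_i(a)) = 0$ for \emph{every} $a$, yet $u_i = u\circ f_i$ is a step function rising by $\eta/2$ per period $\eta$, so any continuous regularization $v_i$ at scale $\eta$ must have average slope close to $1/2$ across every interval of length $\gg \eta$, forcing $\Lip v_i$ to be bounded below by a universal constant on a fixed positive fraction of every such interval. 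Nor can you retreat from the pointwise bound to an integral bound $\int (\Lip v_i)^p \, d\mu_i \lesssim \int (\Lip u)^p\, d\mu + o(1)$: comparing scale-$\eta$ difference quotients with the infinitesimal $\Lip u$ in $L^p$ is itself a Poincar\'e-type estimate on the limit space, i.e.\ essentially what you are trying to prove, so that route is circular. The appeal to Theorem \ref{Cheeg1} and uniform integrability does no work here; doubling of $\mu$ cannot resurrect a pointwise inequality that fails on sets of definite measure.

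The repair---and what the actual proofs of Cheeger, Keith, and \cite{HKST} do---is to never compare infinitesimal gradients across the approximation. One transfers data at a \emph{definite} scale $\delta$: difference quotients over pairs or chains of points in maximal $\delta$-separated nets, together with measures of balls. Such discrete data are stable under $\eta$-approximate isometries once $\eta \ll \delta$, and their integrals pass to the limit by weak-$*$ convergence of the pushforward measures (your $\mu(\bdry B)=0$ remark correctly handles the boundary issue there). The upper-gradient inequality on each $X_i$ implies these scale-$\delta$ inequalities with uniform constants (in this paper's language, that is the content of Theorem \ref{XtoV}); passing to the limit gives them on $X$ for every $\delta>0$; and then---this is the deep step and the true content of the theorem---a self-improvement result of Keith shows that on a complete doubling space the uniform-in-$\delta$ discrete inequalities upgrade to the full upper-gradient inequality (\ref{pi}). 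Your opening paragraph invokes essentially this equivalence, but deploys it on the wrong side of the limit: it must be applied to the fixed space $X$ \emph{after} the scale-$\delta$ transfer, not used to license an infinitesimal pointwise transfer that the Gromov--Hausdorff hypothesis cannot support. Note finally that the discrete characterization just described is in substance Theorem \ref{VtoX} of this paper, whose proof relies on the very theorem you are proving, so it cannot be borrowed here without circularity.
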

The above two theorems are usually presented with the requirement that $(X_i, d_i)$ be length spaces. However, this requirement is not necessary for the desired results. To see a discussion about the lack of length spaces we have presented, refer to \cite{HKST} (Chapter 11).
We now have the resources necessary to explicitly achieve Theorem~\ref{VtoX}.  We first discuss some examples to put the formulation of Theorem \ref{VtoX} in context.

\section{Examples from $\mathbb{R}^2$}
\label{sec:examples}

Here we wish to let $X= \mathbb{R}^2$, $d$ be the Euclidean metric, and $\mu$ be Lebesgue measure.  In the next section below we will consider sequences of connected spaces, derived from a sequence of discrete spaces, which converge under the pointed measured Gromov-Hausdorff  topology to a space {$(\tilde X,\tilde d)$}.  Here we wish to show that even when discretized versions of $\mathbb{R}^2$ are considered, the limit space will not be $\mathbb{R}^2$ with the Euclidean metric and Lebesgue measure.  This {is the reason for the flexibility of the conditions in Theorem \ref{VtoX}}.

Consider the discretization sequence generated by the integer grid $\mathbb{Z} \times \mathbb{Z}$ and dyadic scaling, i.e. $V_1 = \mathbb{Z} \times \mathbb{Z}$ and $V_i = {\epsilon_i} (\mathbb{Z} \times \mathbb{Z})$ where $\epsilon_i = \frac{1}{2^{i-1}}$.  Under the scheme for $(V_i, d_i, m_i)$ introduced in Section 3,  for large $n$
\[ d_n ((0,0), (0, 1)) = 1/2 \hspace{1cm}  d_n((0,0), (1,1)) = 1/2. \]
In fact, one can see that $d_n$ shrinks distances of neighbors by a factor of $2^n$ for horizontal and vertical neighbors, leaving the distance between two points along a vertical or horizontal line fixed as $n \rightarrow \infty$. However, the distance between neighbors along a diagonal also shrinks by $2^n$ under $d_n$, but the Lebesgue distance between two points shrinks by a rate of $(\sqrt{2}/2^n)$, leaving a distortion of $\sqrt{2}$. However,  verifying the discrete Poincar\'e inequality on each of the $V_i$ is a simple task as shown below, and the data is independent of $i$.

As far as the measures $m_i$ for this example are concerned, a similar problem occurs.  If one pushes these measures forward onto $\mathbb{R}^2$ we get a sequence of measures, $\mu_i$, which does not converge to Lebesgue measure.  For example
\[ \mu_3(B_{V_3}((0,0),1)) = \frac{43}{16} \pi, \mu_4(B_{V_4}((0,0), 1)) = \frac{193}{64} \pi, \mbox{ and } \mu_5(B_{V_5}((0,0),1)) = \frac{793}{256} \pi \]
and $\lim_{n \to \infty} \mu_n (B_{V_n}((0,0),1)) = \pi^2$.  This convergence can be seen geometrically as the vertices in the $n$-th discretization cover the unit ball with ``squares'' which have area $\pi$ due to the definition of $m_i$. In fact, $\mu_i$ will weak-star converge to the Lebesgue measure multiplied by $\pi$.  It should be clear at this point that a less symmetric discretization of $\mathbb{R}^2$ can lead to measures which are not just multiples of Lebesgue measure.  One of the novelties of Theorem \ref{VtoX} is that one need not calculate the weak$^*$ limit of $\mu_n$. Instead, it is enough to verify that the collection {of discretezations have the uniform properties (2) through (5)}.  In this case, the doubling constant of $\mu$ is $4$, and for all $i\in \N$ the maximum degree of any vertex in $V_i$ is $28$. We find that $7128= 28\cdot4^4$ suffices as the doubling constant for all $m_i$. Thus, {after verifying a discrete Poincar\'e inequality}, all the conditions of Theorem \ref{VtoX} are satisfied, and one can conclude that $(\R^2, d, \mu)$ must also support a Poincar\'e inequality with the similar data.


Now to verify this discerete Poincar\'e inequality for our discretization of $\mathbb{R}^2$. For each $i \in \N$ we set $V_i: = \frac1{2^i}\left(\Z \times \Z\right)$ where the step size between neighbors is $\frac1{2^i}$. That is, we are taking $\epsilon_i = \frac1{2^i}$.  We will consider a ball, $B\subset \R^2$, of radius $n \in N$ centered at the point $(0,0)$. Let $x$ and $y$ be two points in $B$. By the construction of $\Z \times \Z$, there is a path of points $p_0, p_1, \dots, p_k \in B$ such that $x = p_0 \sim p_1 \sim \dots \sim p_k = y$. To assign these points, let $\gamma$ be the straight line path in $\R^2$ from $x$ to $y$. We have that $\gamma$ is a rectifiable curve and assume that length$(\gamma) = |x-y|$. We set $t_i$ to be the point on $\gamma$ such that $x = t_0$, $t_k = y$, and for each $i$ we have that $|t_i - t_{i-1}| = \epsilon_i$, with the exception of $|t_k - t_{k-1}|$, which may be less than or equal to $\epsilon_i$. For each $t_i$ along $\gamma$, there is a point in $V_i$ within $\epsilon_i$ distance from $t_i$ on $\gamma$. We let $p_i$ be these points, and see that $|p_i - p_{i-1}| \leq 3\epsilon_i$. Thus, for each $i$ we have that $p_i \sim p_{i-1}$. By the triangle inequality, we see that
\[
|f(x) - f(y)| \leq \sum_{i =1}^{k} |f(p_i) - f(p_{i-1})|.
\]

We will integrate both sides of the above inequality. Such integration, along with the observation above yields
\[
\sum_{x \in B} |f(x) - f(y)| m_i(x) \leq Cn \sum_{x \in B} \sum_{z \sim x} |f(z) - f(x)| m_i(x),
\]
where $C$ is a constant depending on the doubling constant of $m_i$. Since each $m_i$ has a doubling constant that is uniform across all $i\in \N$, then this $C$ is uniform among all $i$ as well. In fact, we may take $C$ to be $256$, which is the doubling constant of $\mu$ to the fourth power. Integrating again on both sides we see that
\[
\sum_{y \in B}\sum_{x \in B} |f(x) - f(y)| m_i(x)m_i(y) \leq Cn\,m_i(B) \sum_{x \in B} \sum_{z \sim x} |f(z) - f(x)| m_i(x).
\]
By averaging both the two summations on the left hand side of the inequality, we arrive at a $(1,1)$-Poincar\'e inequality:
\[
\avint_B \avint_B |f(x) - f(y)| dm_i(x) dm_i(y) \leq C n \avint_B |\grad f(x)| dm_i(x).
\]
Through use of H\"older's inequality we arrive at the desired $(1,p)$-Poincar\'e inequality:
\[
\avint_B \avint_B |f(x) - f(y)| dm_i(x) dm_i(y) \leq C n \left(\avint_B |\grad f(x)|^p dm_i(x)\right)^{\frac1p}.
\]

Since we have that $C$ is uniform constant independent of $\epsilon_i$, then by Theorem \ref{VtoX}, $(\R^2, d, \mu)$ will support a $(1,p)$-Poincar\'e inequality. The ease of the above argument displays the usefulness of Theorem \ref{VtoX}.

\section{Proof of Theorem \ref{VtoX}}

Recall that we begin with a doubling complete metric measure space $(X, d_X, \mu)$.
We begin by supposing that $(X,d_X, \mu)$ supports a $(1,p)$-Poincar\'e inequality. By Theorem (\ref{XtoV}), we can make a nested embedded sequence of graphs $(V_i, d_{V_i}, m_i)$ into $X$ {with $\epsilon_i = 1/2^{i-1}$}. By construction, we see that
\[
d_H(n_i(V_i), X) \leq \epsilon_{V_i} \to 0
\]
as $i \to \infty$. Lemma \ref{blipcomp} guarantees that $d_{V_i}$ is bi-Lipschitz equivalent to $d_X$ for all $x, y \in V_i$. Lemma \ref{mcompmu} showed that $\mu$ is comparable to $m_i$ on balls with radius greater than $\epsilon_{V_i}$. Section 4 showed that $m_i$ was doubling, with doubling constant independent of $\epsilon_{V_i}$, ensuring that the entire family $(V_i, d_{V_i}, m_i)$ has a uniform doubling constant. Section 5 showed that all $(V_i, d_{V_i}, m_i)$ support a $(1,p)$-Poincar\'e inequality with uniform constants. Thus, one direction of the theorem is proved.

Conversely, assume that there exists a nested embedded sequence of graphs $(V_i, d_{V_i}, m_i)$ into $X$ such that
\begin{enumerate}
\item[(1)] The Hausdorff distance, $d_H (n_i(V_i), X) = H_i$, is finite for all $i \geq 0$ and $H_i \to 0$ as $i \to \infty$.
\item[(2)] There is a uniform $L > 1$ such that for all $i \geq 0$ and all $x,y \in V_i$,
\[ \frac{1}{L} d (n_i(x), n_i(y)) \leq d_{V_i} (x,y) \leq L \, d (n_i(x), n_i(y)) \]
\item[(3)] There is a uniform $K > 1$ such that for all $i \geq 0$ all $r > H_i$ and $x \in V_i$
\[ \frac1K \leq \frac{m_i (B_{V_i}(x,r))}{\mu (B_X (x,r))} \leq K\]
\item[(4)] $(V_i, d_{V_i}, m_i)$ are all doubling with uniform doubling constant.
\item[(5)] $(V_i, d_{V_i}, m_i)$ all support a $(1,p)$-Poicar\'e inequality with uniform data.
\end{enumerate}
We must now show that $(X,d_X, \mu)$ supports a $(1,p)$-Poincar\'e inequality. Our method of verifying this result requires the transformation of the sequence of graphs, $(V_i)$ into a new sequence of connected topological spaces. This is necessary to use Theorems \ref{Cheeg1} and \ref{GHconv} to verify the $(1,p)$-Poincar\'e inequality on $X$.

\subsection{Extension of $V_i$} We may extend each $V_i$ into a path connected space, $G_i$, in a similar manner as was done by N. Shanmugalingam in \cite{N} (Section 3), by placing an isomorphic copy of the interval $[0, \epsilon_{V_i}]$ between each set of neighbors. These extensions are not graphs in the traditional sense, but rather, they are a connected space that may be thought of as 1-simplexes. They are connected in the topological sense, and we will often speak of the vertices in $G_i$, and points on its {\em{edges}}, which are points in an interval $[0, \epsilon_{V_i}]$ with specified associated vertices. We also adjust the metric and measure for $G_i$. For the metric, we say that two vertices $\tilde{x}, \tilde{y}$ in $V_i \subset G_i$ have a distance $\tilde{d}_{G_i}(\tilde{x},\tilde{y}) := d_i(\tilde{x},\tilde{y}) $. If $\tilde{x}$ is a vertex and $\tilde{y}$ is on an edge which is connected to $\tilde{x}$, we use the obvious distance inherited from the isomorphic copy of $[0, \epsilon_{V_i}]$.  If $\tilde{x}$ is a point on the edge and $\tilde{y}$ is a vertex which is not an endpoint for the edge that $\tilde{x}$ is on, we select the vertex $\tilde{v}_{\tilde{x}}$ on the edge that $\tilde{x}$ is on which minimizes the following expression: $\tilde{d}_{G_i} (\tilde{x}, \tilde{y}) := d_{V_i} (\tilde{y}, \tilde{v}_{\tilde{x}}) + |\tilde{x} - \tilde{v}_{\tilde{x}}|$.  If both $\tilde{x}$ and $\tilde{y}$ are on edges, we just extend as in the previous case in the obvious way. We note that when finding the distance between two points on an edge, it may not be the case that distance is found by selecting the closest vertices to the points, but becomes the minimum distance of 4 different paths that involve the associated vertices. This defines a new metric, $\tilde d_{G_i}$, on our space $G_i$.

We also build a new measure $\overline{m}_i$ in terms of $m_i$ such that $\overline{m}_i$ is comparable to $m_i$ on balls with radius greater than $\epsilon_{V_i}$. This implies that $\overline{m}_i$ is comparable to $\mu$ on balls greater than $\epsilon_{V_i}$ just as $m_i$ is by assumption (3). If $U$ is a subset of $G_i$, then we define
\[
\overline{m}_i(U) = \sum_I \frac{\text{length}(I \cap U)}{\epsilon_{V_i}}\left[m_i(\tilde{x}_I) + m_i(\tilde{y}_I)\right],
\]
where the sum is over the intervals $I=[0, \epsilon_{V_i}]$ such that $I\cap U \neq \varnothing$, and each $I$ has associated endpoints $\tilde{x}_I$ and $\tilde{y}_I$. Length is, of course, understood as Lebesgue measure. {Because of the uniform bound on degree in the graph $V_i$ necessitated by the doubling condition property also (3) holds for $\overline{m}_i$.}  We now want to verify that this extended space $(G_i, \tilde d_{G_i}, \overline {m}_i)$ has a doubling measure, and supports a $(1,p)$-Poincar\'e inequality in the traditional sense of (\ref{pi}). Similar proofs to the next two lemmas can be found in \cite{N} for the case that $\epsilon_{V_i} = 1$ and $m_i(\tilde x) = 1$ for all $\tilde x \in V_i \subset G_i$. In this paper, we present the more general case that includes more general measures and metrics.

\begin{lem} \label{mgdoub}
The measure $\overline{m}_i$ is doubling.
\end{lem}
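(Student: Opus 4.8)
The plan is to show that the extended measure $\overline{m}_i$ on $G_i$ is doubling by leveraging the already-established doubling property of $m_i$ on $V_i$ (Lemma \ref{DoubXV}) and the comparability of $\overline{m}_i$ to $m_i$ at scales above $\epsilon_{V_i}$. The argument splits into two scale regimes, mirroring the proof of Lemma \ref{DoubXV}: the small-scale case $0 < r < \epsilon_{V_i}$, where a ball sits inside at most a bounded number of edges and the measure is essentially computed from Lebesgue length along those intervals, and the large-scale case $r \geq \epsilon_{V_i}$, where $\overline{m}_i$ is comparable to $m_i$ and the doubling of $m_i$ can be invoked directly.

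\medskip

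First I would handle the large-scale case. For $r \geq \epsilon_{V_i}$, the key observation is that $\overline{m}_i(B_{G_i}(\tilde{x}, r))$ is comparable (with constants depending only on the maximal degree, hence on $C_\mu$) to $m_i$ of the corresponding discrete ball in $V_i$: each edge contributes a bounded fraction of the endpoint masses, and the uniform degree bound ensures that summing edge contributions over a ball is comparable to summing vertex masses over the vertices in that ball. Concretely, I would establish two-sided bounds of the form $\tfrac1c \, m_i(B_{V_i}(\tilde{x}, r')) \leq \overline{m}_i(B_{G_i}(\tilde{x}, r)) \leq c\, m_i(B_{V_i}(\tilde{x}, r''))$ for suitable comparable radii $r', r''$ differing from $r$ by at most $\epsilon_{V_i} \leq r$ (hence by a bounded multiplicative factor). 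Then doubling of $\overline{m}_i$ at scale $r$ follows from doubling of $m_i$ together with Remark \ref{drmk} to absorb the radius distortions, yielding a doubling constant for $\overline{m}_i$ that depends only on $C_\mu$ and $L$, and crucially \emph{not} on $\epsilon_{V_i}$.

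\medskip

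Next I would treat the small-scale case $0 < r < \epsilon_{V_i}$. Here the ball $B_{G_i}(\tilde{x}, 2r)$ meets only finitely many edges — at most the edges incident to the vertices within distance $2r < 2\epsilon_{V_i}$ of the center, a number bounded by the maximal degree $C_\mu^4$. On each such edge the measure $\overline{m}_i$ is a constant multiple of one-dimensional Lebesgue measure, so on this scale $\overline{m}_i$ restricted to a single edge is doubling with an absolute constant, and the finitely many edges meeting the ball contribute comparably after passing from $B_{G_i}(\tilde{x}, r)$ to $B_{G_i}(\tilde{x}, 2r)$. The only subtlety is when $\tilde{x}$ lies near a vertex so that doubling the radius brings in new edges; in that case I would compare the measure of the doubled ball against the measure on the one or two edges already present in $B_{G_i}(\tilde{x}, r)$, using the uniform degree bound to control the number of newly included edges and the uniform comparability of endpoint masses of adjacent edges (again via the degree bound and the construction of $m_i$).

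\medskip

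\textbf{The main obstacle} I anticipate is the transition regime where $r$ is comparable to $\epsilon_{V_i}$, and in particular balls centered at a point in the interior of an edge rather than at a vertex: there the geometry of $G_i$ (intervals glued at vertices of possibly high degree) makes the volume of a ball genuinely one-dimensional near the center but ``branching'' once it reaches a vertex, so one must argue that doubling the radius increases the measure by at most a bounded factor even across this change in local dimension. I would control this by always comparing both $B_{G_i}(\tilde{x}, r)$ and $B_{G_i}(\tilde{x}, 2r)$ to discrete balls $B_{V_i}$ centered at a nearby vertex and invoking the large-scale comparison, thereby reducing every case to the doubling of $m_i$; the uniform degree bound is what makes all the constants independent of $i$ and of $\epsilon_{V_i}$.
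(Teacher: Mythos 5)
Your proposal is correct and follows essentially the same route as the paper: the paper also splits into the regimes $r \lessgtr \epsilon_{V_i}$, proves that $\overline{m}_i$ and $m_i$ are comparable on balls at scales above $\epsilon_{V_i}$ so that doubling of $m_i$ transfers over, handles small radii via the linear (Lebesgue-type) structure of $\overline{m}_i$ on edges together with the uniform degree bound, and treats centers in edge interiors by comparing to balls centered at the nearest vertex (using doubling of $m_i$ to compare masses of nearby vertices). The only differences are organizational, not mathematical.
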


\begin{proof}
We first consider that $\tilde{x} \in V_i$, and will discuss $\tilde{x}$ being a point on an edge shortly.

\textit{Case 1:} If $r \leq \epsilon_{V_i}$, then
\[
\frac{r}{\epsilon_{V_i}}m_i(B(\tilde{x},r)) = \frac{r}{\epsilon_{V_i}}m_i(\tilde{x})\leq  \frac{r}{\epsilon_{V_i}}\sum_I [m_i(\tilde{x}) + m_i(\tilde{y}_I)] = \overline{m}_i(B(\tilde{x}, r)).
\]
Remembering that $\tilde x$ has a bounded degree that only depends on the doubling constant of $m_i$, we see that
\[
\overline{m}_i(B(\tilde{x}, 2r)) \leq \sum_I \frac{r}{\epsilon_{V_i}}m_i(B(\tilde{x}, 2\epsilon_{V_i})) \leq \frac{C^2 r}{\epsilon_{V_i}}m_i(B(\tilde{x}, \epsilon_{V_i})) = \frac{C^2 r}{\epsilon_{V_i}}m_i(B(\tilde{x}, r)),
\]
where $C$ is a constant that only depends on the doubling constant of $m_i$. Combining these two facts, we see that
\[
\overline{m}_i(B(\tilde{x},2r)) \leq \frac{2C^2r}{\epsilon_{V_i}}m_i(B(\tilde x,2r)) \leq 2C^3\overline m_i(B(\tilde{x},r)).
\]

\textit{Case 2:} If $r \geq \epsilon_{V_i}$, then we notice that the largest length of any edge in $B(\tilde x, r)$ is $\epsilon_{V_i}$.
We see that
\begin{align*}
\overline{m}_i(B(\tilde{x}, r)) &= \sum_I \frac{\text{length}(I \cap U)}{\epsilon_{V_i}}\left[m_i(\tilde{z}_I) + m_i(\tilde{y}_I)\right] \\
& \leq \sum_I \left[m_i(\tilde{z}_I) + m_i(\tilde{y}_I)\right] \\
& \leq 2C^4\ m_i(B(\tilde{x}, r + \epsilon_{V_i})) \\
& \leq 2C^5\ m_i(B(\tilde{x}, r)).
\end{align*}
Recall that the max degree of any vertex in $V_i$ is less than or equal to $C^4$.   { It is trivial to see that $m_i(B(\tilde{x},r)) \leq \overline{m}_i (B(\tilde{x},r))$ in this case, and so $m_i$ and $\overline{m}_i$ are comparble, and hence $\overline{m}_i$ is doubling.

Now, we consider the case that $\tilde{x}$ is not in $V_i$, this is a little less clear.  Let $\tilde v$ be the nearest vertex to $\tilde x$.  We have a few cases:

\textit{Case 1:} If $r/2 \geq \epsilon_{V_i}$, we note that
\[ B(\tilde v, r- \epsilon_{V_i}) \subset B(\tilde x, r) \subset B(\tilde v, r + \epsilon_{V_i}) \]
and so
\[ \overline{m}_i (B (\tilde x, 2r)) \leq \overline{m}_i (B(\tilde v, 2r + \epsilon_{V_i})) \leq C \overline{m}_i (B(\tilde v, \frac{r}{2})) \leq C \overline{m}_i (B(\tilde v, r - \epsilon_{V_i})) \leq C \overline{m}_i (B(\tilde x, r)) \]
where $C$ depends on the doubling constant of $m_i$.

\textit{Case 2:} If $\epsilon_{V_i} < r < 2 \epsilon_{V_i}$, recall $\tilde d_{G_i}(\tilde x, \tilde v) \leq \epsilon_{V_i}/2$, and that
\[
B(\tilde x, 2r) \subset B(\tilde v, 2r + \epsilon_{V_i}) \subset B(\tilde v, 3r).
\]
 Hence,
\[
 \overline{m}_i (B(\tilde x, 2r)) \leq \overline{m}_i (B(\tilde v, 3r)) \leq C \overline{m}_i (B(\tilde v, r/4)),
 \]
where again $C$ depends only on the doubling constant of $m_i$.  By the assumption in this case, $B(\tilde v, r/4) \subset B(\tilde x, r)$ and so $\overline{m}_i(B(\tilde x, 2r) \leq C \overline{m}_i (B(\tilde x, r))$.

\textit{Case 3:} If $\epsilon_{V_i}/4 < r < \epsilon_{V_i}$, we note that $m_i(\tilde v)$ is comparable to $m_i(\tilde w)$ for any $\tilde w$ at distance $2 \epsilon_{V_i}$ or less away from $\tilde v$.  This is because of the doubling of $m_i$.  Then $\overline{m}_i (B(\tilde x, 2r))$ is bounded above a constant times the sum of all the $m_i$ measures of vertices $\tilde w$ at distance $2 \epsilon_{V_i}$ or less away from $\tilde v$, which is comparable to $m_i(\tilde v)$.  But $\overline{m}_i (B(\tilde x, r)) \geq m_i (\tilde v) /4$ because $r > \epsilon_{V_i}/4$ and the definition of $\overline{m}_i$.  So doubling follows.

\textit{Case 4:} If $r < \epsilon_{V_i}/4$, this case is trivial because $B(\tilde x, 2r)$ can only contain edges connected to $\tilde v$.
}
\end{proof}

\begin{lem}
If $(V_i,d_i,m_i)$ supports a discretized $(1,p)$-Poincar\'{e} inequality in the sense of $(\ref{dpi})$, then $(G_i, \tilde d_{G_i}, \overline{m}_i)$ supports a $(1,p)$-Poincar\'{e} inequality in the sense of $(\ref{pi})$.
\end{lem}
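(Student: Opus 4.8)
The plan is to transfer the discrete inequality (\ref{dpi}) on $V_i$ to the continuous inequality (\ref{pi}) on $G_i$ by passing through the vertex values. Given $f\colon G_i \to \R$ with an upper gradient $g$, I would set $u\colon V_i \to \R$, $u(\tilde x) := f(\tilde x)$, apply the discrete Poincar\'e inequality to $u$, and then translate both sides of (\ref{dpi}) back into statements about $f$ and $g$ on $G_i$. Two translations are needed: (a) an \emph{energy comparison} bounding $\int|\grad u|^p\,dm_i$ by $\int g^p\,d\overline{m}_i$, and (b) an \emph{oscillation comparison} bounding $\int_B |f-c|\,d\overline{m}_i$ in terms of the discrete oscillation of $u$ plus an edge-energy error. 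Throughout I would fix a ball $B = B(p,r)\subset G_i$ and first treat the main case $r \geq \epsilon_{V_i}$.

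For the energy comparison (a), observe that if $\tilde x \sim \tilde y$ are joined by an edge $I$, which is an isometric copy of $[0,\epsilon_{V_i}]$, then the upper gradient inequality along the path $I$ gives $|u(\tilde x)-u(\tilde y)| = |f(\tilde x)-f(\tilde y)| \leq \int_I g\,ds$. Dividing by $\epsilon_{V_i}$, using the uniform degree bound (at most $C^4$ neighbours, forced by the doubling property) together with Jensen's inequality, and then H\"older's inequality $\left(\int_I g\,ds\right)^p \leq \epsilon_{V_i}^{p-1}\int_I g^p\,ds$ on each edge, I would reach a pointwise estimate of the form $|\grad u(\tilde x)|^p \leq \frac{C}{\epsilon_{V_i}}\sum_{\tilde y\sim\tilde x}\int_{I_{\tilde x\tilde y}} g^p\,ds$. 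Multiplying by $m_i(\tilde x)$ and summing over a vertex ball, each edge is counted at both endpoints, so the weight $\frac{m_i(\tilde x_I)+m_i(\tilde y_I)}{\epsilon_{V_i}}$ that appears is exactly the density of $\overline{m}_i$ on $I$; the sum therefore collapses into $C\int g^p\,d\overline{m}_i$ over the corresponding region of $G_i$, which is (a).

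For the oscillation comparison (b), I would use that on each edge $I$ with endpoint $\tilde x_I$ the upper gradient gives $|f(t)-u(\tilde x_I)| \leq \int_I g\,ds$ for every $t\in I$, so that for any constant $c$,
\[
\int_I |f-c|\,dt \leq \epsilon_{V_i}\int_I g\,ds + \epsilon_{V_i}\,|u(\tilde x_I)-c|.
\]
Weighting by the density of $\overline{m}_i$ and summing over the edges meeting $B$ splits $\int_B|f-c|\,d\overline{m}_i$ into an edge-energy term, bounded as in (a) by $Cr\left(\dashint g^p\,d\overline{m}_i\right)^{1/p}$ using H\"older and $\epsilon_{V_i}\leq r$, plus a discrete term comparable (by the degree bound) to $\int_{B_V}|u-c|\,dm_i$. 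Choosing $c = u_{B_V}$ and invoking (\ref{dpi}) for $u$, followed by the energy comparison (a), controls the discrete term. Finally, since $\dashint_B|f-f_B|\,d\overline{m}_i \leq 2\dashint_B|f-c|\,d\overline{m}_i$ for any constant $c$, and since $\overline{m}_i(B)$ is comparable to $m_i(B_V)$ by Lemma \ref{mgdoub} and assumption (3), combining the pieces and absorbing the various dilation factors into a single $\lambda$ yields (\ref{pi}) on $G_i$. The small-ball case $r < \epsilon_{V_i}$ reduces to the one-dimensional Poincar\'e inequality on the finitely many intervals meeting $B$, which holds with a uniform constant.

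The main obstacle is the oscillation comparison (b): because $\overline{m}_i$ is carried on the edges rather than on the vertices, the left-hand side of (\ref{pi}) genuinely sees the behaviour of $f$ along each edge and cannot be reduced to vertex data for free. The device that makes it work is the bound $|f(t)-f(\tilde x_I)| \leq \int_I g\,ds$, which collapses each edge to a single endpoint value at the cost of an edge-energy error, together with the bookkeeping that matches the vertex weights $m_i(\tilde x)$ against the edge densities $\frac{m_i(\tilde x_I)+m_i(\tilde y_I)}{\epsilon_{V_i}}$; the uniform degree bound guarantees these are comparable with constants independent of $i$.
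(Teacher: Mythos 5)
Your proposal is correct, but it follows a genuinely different route from the paper's proof. The paper first reduces to Lipschitz functions $u$ with upper gradient $\Lip u$ (using completeness and the doubling of $\overline m_i$, via Theorem 8.4.2 of \cite{HKST}), and in the main case $r > \epsilon_{V_i}$ it introduces the piecewise-linear interpolant $\tilde u$ of the vertex values, splits $u$ as $\tilde u$ plus an error vanishing on $V_i$, and controls the interpolant term by the discrete inequality (\ref{dpi}) combined with the fact that linear functions minimize $p$-energy on each edge (inequality (\ref{energy})); on an edge, $\Lip \tilde u$ \emph{equals} the discrete difference quotient, which is what makes that comparison exact. You avoid both devices: working with an arbitrary upper gradient $g$, you apply the upper-gradient inequality along each edge directly, obtaining the energy comparison $|\grad u(\tilde x)|^p \lesssim \epsilon_{V_i}^{-1}\sum_{\tilde y \sim \tilde x}\int_{I} g^p\,ds$ from the degree bound and H\"older on edges, and the endpoint-collapse bound $|f(t)-f(\tilde x_I)| \le \int_I g\,ds$ in place of the interpolant. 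What the paper's route buys is the exact matching of discrete and continuous energies through $\tilde u$; what yours buys is a shorter, more self-contained argument that needs neither the density-of-Lipschitz reduction nor the one-dimensional energy-minimization fact, and treats all upper gradients at once. Two points to tighten: (i) in your oscillation comparison the weight $m_i(\tilde y_I)$ multiplies $|u(\tilde x_I)-c|$, so you need adjacent vertices to have comparable masses; this follows from the doubling of $m_i$ (as used in the paper's Lemma \ref{mgdoub}), not from the degree bound alone. (ii) In the small-ball case the intervals meeting $B$ share endpoints, and a one-dimensional Poincar\'e inequality applied to each interval \emph{separately} would not control the oscillation of $f$ across different edges (a function taking distinct constant values on distinct edges defeats it); one must chain through the common vertex, exactly as the paper does in its Case 1 and as your endpoint-collapse device permits, but your sentence as written glosses this step.
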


\begin{proof}
{Since $G_i$ is a complete space, and by Lemma \ref{mgdoub} we have that $\overline m_i$ is doubling, then it suffices to verify this lemma with Lipschitz functions (see Theorem 8.4.2 in~\cite{HKST}). Let $u: G_i \to \R$ be a Lipschitz function, and recall that $\Lip u$ is an upper gradient of $u$.} We will assume that our ball, $B$, is centered at a vertex, $\tilde{x}$. We may do this due to the fact that we may increase the $\lambda$ value in the data of a Poincar\'e inequality by a constant that does not depend on $r$. 

\textit{Case 1:} The radius $r \leq \epsilon_{V_i}$.  For $\tilde{y}$ a neighbor of $\tilde{x}$, let $r\tilde{y}$ represent a point on the interval connecting $\tilde{x}$ to $\tilde{y}$ with a distance of $r$ from $\tilde{x}$. {We see that
\begin{equation}
|u(s\tilde{y}) - u(\tilde{x})| \leq \int_{\tilde x}^{s\tilde y} |\Lip u(\tau\tilde y)| d\tau,
\label{ug}
\end{equation}
{where $d\tau$ is Lebesgue measure on $[0, \epsilon_{V_i}]$.}  We notice the following bound, which proves useful in later calculations. Let $c \in \R$ and suppose that $c \leq u_B$. Then we see that
\begin{align*}
\avint_B |u_B - c|d\overline m_i = u_B -c = \avint_B u\,d\overline m_i - \avint_B c\,d\overline m_i = \avint_B (u - c)d\overline m_i \leq \avint_B |c -u|d\overline m_i.
\end{align*}
If $c > u_B$, then we have the similar result that
\[
\avint_B |u_B - c|d\overline m_i = c - u_B  \leq \avint_B |c -u|d\overline m_i.
\]
Notice that $|u(\tilde y) - u_B| \leq |u(\tilde y) - c| + |c - u_B|$ for each $\tilde y \in G_i$. Hence,
\[
\avint_B |u- u_B|\, d\overline m_i \leq \avint_B |u-c| d\overline m_i + \avint_B|c-u_B|d\overline m_i \leq 2\avint_B|c-u|d\overline m_i,
\]
for any $c \in \R$. In particular, we see that
\begin{equation}
\avint_B |u- u_B|\, d\overline m_i \approx \inf_{c \in \R} \avint_B|c-u|d\overline m_i.
\label{BltPI}
\end{equation}

Notice that $u(\tilde x)$ is a constant value, since $\tilde x$ is a fixed point in $B \subset G_i$. Then,
\[
\avint_B |u -u_B|\, d\overline m_i \leq 2\avint_B|u(\tilde x) - u|d\overline m_i.
\]
This right hand value is what we will use to show our Poincar\'{e} inequality. Recalling (\ref{ug}), we see that

\begin{align*}
\avint_B|u(\tilde x) - u|\, d\overline m_i& = \frac 1{\overline m_i(B)} \int_B |u(\tilde x) - u|\, d\overline m_i \\
& = \frac1 {\overline m_i(B)} \sum_{\tilde y \sim \tilde x} \int_{\tilde x}^{r\tilde y}|u(\tilde x) - u(s\tilde y)|\, d\overline m_i(s\tilde y) \\
& \leq \, \frac1 {\overline m_i(B)} \sum_{\tilde y \sim \tilde x} \int_{\tilde x}^{r \tilde y}\left( \int_{\tilde x}^{s \tilde y}|\Lip u(\tau \tilde y)|d\tau \right) d\overline m_i(s\tilde y) \\
& \leq \frac1 {\overline m_i(B)} \sum_{\tilde y \sim \tilde x} \int_{\tilde x}^{r \tilde y} \int_{\tilde x}^{r \tilde y}|\Lip u(\tau \tilde y)|d\tau d\overline m_i(s\tilde y).
\end{align*}
{We notice that $d \overline m_i (\tau y) = \frac{m_i(\tilde x) + m_i(\tilde y)}{\epsilon_{V_i}} d \tau$, and so continuing}

\begin{align*}
\avint_B|u(\tilde x) - u|d\overline m_i &  = \frac {r}{\overline m_i(B)} \sum_{\tilde y \sim \tilde x} \int_{\tilde x}^{r\tilde y} |\Lip u(\tau\tilde y)|d\overline m_i(\tau \tilde y) \\
& = r\avint_B |\Lip u| d\overline m_i.
\end{align*}
Now, by using H\"older's inequality, we have a $(1,p)$-Poincar\'e inequality.

\textit{Case 2:} The radius $r > \epsilon_{V_i}$.
We define a new function $\hat u$ as the restriction of $u$ to the vertex set $V_i$. Now, consider the piecewise linear extension of $\hat u$ to $G_i$, which we will denote $\tilde u$. Note that $\tilde u$ and $u$ agree on $V_i \subset G_i$ and differ on the edge set of $G_i$. Let $f = \tilde u - u$, which vanishes on $V_i$ {and note that $f_B = \tilde u_B - u_B$}. Now we consider our Poincar\'e inequality using the alternate form from (\ref{BltPI}) for the right hand side. Clearly we see that
\begin{equation}
\begin{aligned}
\avint_B |u - u_B|d\overline m_i & \leq \avint_B|\tilde u - \tilde u_B|d\overline m_i + \avint_B|f - f_B|d\overline m_i.
\label{split}
\end{aligned}
\end{equation}
We will investigate the two terms on the right hand separately. First, we consider
\[
\avint_B|\tilde u - \tilde u_B|d\overline m_i.
\]
By the proof of Lemma $\ref{mgdoub}$, $\overline{m}_i (B(\tilde y, \epsilon_{V_i}))$ is comparable to $m_i(\tilde y)$ for all $\tilde y \in V_i$. {First, we note that}
\[
\Lip \tilde u(\tilde z) = \frac{|\tilde u(\tilde x) - \tilde u(\tilde y)|}{\epsilon_{V_i}}
\]
whenever $\tilde z$ is in the edge connecting vertices $\tilde x$ and $\tilde y$,  {and by inequality (\ref{dpi})}

\begin{align*}
\frac{1}{m_i(B)}\sum_{\tilde{z} \in B} |\tilde u(\tilde{z}) - \tilde u_B|m_i(\tilde z) &\leq Cr\left(\frac{1}{m_i(\lambda B)}\sum_{z\in \lambda B}|\grad  \tilde u(\tilde z)|^p m_i(\tilde{z}) \right)^{1/p}\\
& \leq C'r\left(\frac{1}{\overline m_i(2\lambda B)}\int_{2\lambda B}|\Lip  \tilde u(\tilde z)|^p d\overline m_i \right)^{1/p}.
\end{align*}
The switch from $C$ to $C'$ is to call attention to the comparability constant that is used to change from $m_i(\tilde z)$ to $\overline m_i(B(\tilde z, \epsilon_{V_i}))$, and to call attention to the doubling of $\lambda$ in our integral.

 We know from one-dimensional calculus that, on edges, linear functions have the smallest $p-$energy integrals amongst all Sobolev functions with the same boundary values. That is, since $\tilde u$ is a $p$-harmonic function on each individual edge,
\begin{equation} \label{energy}
\int_I |\Lip \tilde u|^p d\overline m_i \leq \int_I |\Lip u|^pd \overline m_i
\end{equation}
whenever $I$ is an edge connecting two points in $V_i$. {To avoid confusion}, we will distinguish the average value of $\tilde u$ on $B$ with respect to $V_i$ and $G_i$ as follows:
\[
\tilde u_B := \frac1{m_i(B)}\sum_{\tilde x \in B} \tilde u(\tilde x)\, m_i(\tilde x) \hspace{.5in} \overline u_B : = \frac1{\overline m_i(B)} \int_B \tilde u(\tilde x)\, d\overline m_i(\tilde x).
\]
Thus, we have that
\begin{equation}
\frac{1}{m_i(B)}\sum_{\tilde{z} \in B} |\tilde u(\tilde{z}) - \tilde u_B|m_i(\tilde z) \leq Cr\left(\avint_{2\lambda B} |\Lip u|^p d\overline m_i\right)^{1/p},
\label{half}
\end{equation}
for some constant $C$. {We may assume that $\tilde u_B = 0$ as subtracting a constant does not change the upper gradient}. Let $\tilde z \sim \tilde w$ in $2B$, and let $\Gamma_{\tilde z \tilde w}$ be the edge connected $\tilde z$ to $\tilde w$. Since $\tilde u$ is a linear function, we see that
\[
\int_{\Gamma_{\tilde z \tilde w}} |\tilde u| d\overline m_i \leq \frac{|\tilde u(\tilde z)| + |\tilde u(\tilde w)|}2\, \overline m_i(\Gamma_{\tilde z \tilde w}).
\]
Then we see that
\[
\int_B|\tilde u|d\overline m_i \leq \sum_{\tilde z \in 2B} |\tilde u(\tilde z)|\overline m_i(B(\tilde z, \epsilon_{V_i})).
\]
Recalling that $\overline m_i(B)$ is comparable to $m_i(B)$ {and (\ref{BltPI})}, we arrive at
\begin{equation}\label{part1}
\avint_B|\tilde u - \overline u_B|d\overline m_i \leq Cr\left(\avint_{2\lambda B} |\Lip u|^pd\overline m_i \right)^{1/p}.
\end{equation}

Now we look at the second term of the right hand side of (\ref{split}):
\[
\avint_B|f - f_B|d\overline m_i.
\]
{First we see that
\[
\int_B |f - f_B| d\overline m_i = \sum_{I\cap B \neq \emptyset} \int_I |f - f_B|d\overline m_i,
\]
where $I$ is an edge in $G_i$.   Recalling that $f = 0$ on $V_i$, and using the same argument as in Case 1 on each of these integrals, we easily find that on each edge $I$ with $\tilde z$ as one of its endpoints
\[
\int_I  |f-f_B| d\overline m_i \leq 2 \int_I |f|d\overline m_i = 2 \int_I |f - f(\tilde z)| d\overline m_i \leq 2 \epsilon_{V_i} \int_I |\Lip f| d\overline m _i.
\]
Summing up over all the intervals, taking averages, and noting that $r > \epsilon_{V_i}$, we have
\[ \avint_B |f-f_B | d\overline m_i \leq 2 r \avint_B | \Lip f | d\overline m_i.\]
Hence,
\[
\avint_B |f - f_B| d\overline m_i \leq 2 r\avint_B|\Lip f|d\overline m_i \leq 2 r\left(\avint_B |\Lip \tilde u|d\overline m_i  + \avint_B |\Lip u|d\overline m_i\right).
\]}
Recalling the fact stated in (\ref{energy}), and applying H\"older's inequality, we then have that
\[
\avint_B |f - f_B| d\overline m_i \leq 4r\left(\avint_B |\Lip u|d\overline m_i \right)^{1/p}.
\]
Using {this} as well as the bound from (\ref{part1}) in (\ref{split}), we arrive at our $(1,p)$-Poincar\'e inequality:
\[
\avint_B |u - u_B| d\overline m_i \leq Cr\left(\avint_{\lambda_1 B} |\Lip u|d\overline m_i \right)^{1/p},
\]
where $C$ and $\lambda_1$ are some constants that depend only on the doubling constant of $m_i$ and the data for the discretized Poincar\'e inequality on $V_i$.
}

\end{proof}
\subsection{Bi-Lipschitz change in metrics} We have transformed the sequence of discrete spaces, $(V_i, d_i, m_i)$, to a sequence of connected spaces, $(G_i, \tilde d_{G_i}, \overline m_i)$, that have a doubling measure, and support a $(1,p)$-Poincar\'e inequality with data that depends only on the uniform doubling constant and uniform data of the discrete sequence. We now sketch the rest of our proof. We introduce a change in metric from $(G_i, \tilde d_{G_i})$ to a new metric $(G_i, d_{G_i})$, making use of the assumed bi-Lipschitz equivalence of $d_i$ and $d_X$ on $V_i$. For two points $\tilde x$ and $\tilde y$ in $V_i \subset G_i$, we define $d_{G_i}(\tilde x, \tilde y) : = d_X(x, y)$, where $x$ and $y$ are the associated points of $\tilde x$ and $\tilde y$ in $X$. We then extend $d_X$ to all of $G_i$ in the same manner that we extended $d_i$ to $G_i$. We show that this new metric is bi-Lipschitz equivalent to $\tilde d_{G_i}$ in the next paragraph.  This new metric allows us to finish the proof of Theorem \ref{VtoX}. First, $(G_i, d_{G_i}, \overline{m}_i)$ satisfies a $(1,p)$-Poincar\'e inequality with data that only depends upon the data of $(G_i, \tilde d_{G_i}, \overline{m}_i)$. This can be seen as the Poincar\'e inequality is bi-Lipschitz invariant (see chapter 8 of \cite{HKST}). Second, it is this sequence, $(G_i, d_{G_i}, \overline{m}_i)$ that we will show pointed measured Gromov Hausdorff converges to $(X, d_X, \overline{\mu})$, where $\overline{\mu}$ is a comparable measure to $\mu$ on all balls.  The relative ease of this conversion is the reason behind the switch to $d_{G_i}$.  Finally, by the result of Cheeger and Keith which we have listed above as Theorem  \ref{GHconv}, $(X,d_X,\overline{\mu})$ will carry a Poincar\'e inequality. Since measures which are comparable on balls also result in the comparability of integrals of measurable functions, $(X,d_X,\mu)$ will carry the desired Poincar\'e inequality.

It is not hard to show that $d_{G_i}$ is a metric, but is not clear that $d_{G_i}$ is bi-Lipschitz equivalent to $\tilde d_{G_i}$. It is trivial to see that the the metrics are bi-Lipschitz equivalent when restricted to points on $V_i \subset G_i$, since $d_{G_i} =d_X$, and $\tilde d_{G_i} = d_i$ in this case. A more complicated case is when $\tilde x$ and $\tilde y$ are points on edges in $G_i$. For ease of exposition, we will label the associated vertices of the edges containing these points by: $\tilde x_1, \tilde x_2, \tilde y_1$, and $\tilde y_2$ respectively. First, without loss of generality, we will take
\[
\tilde d_{G_i}(\tilde x, \tilde y) = d_i(\tilde x_1, \tilde y_1) + |\tilde x - \tilde x_1| + |\tilde y - \tilde y_1|.
\]
Recall that $d_{G_i}$ is found by finding a shortest path through $G_i$ from $\tilde x$ to $\tilde y$. Then we see, by the bi-Lipschitz equivalence of $d_X$ and $d_i$, and recalling that $L \geq 1$,
\begin{align*}
d_{G_i}(\tilde x, \tilde y) &\leq  d_X(\tilde x_1, \tilde y_1) + |\tilde x - \tilde x_1| + |\tilde y - \tilde y_1| \\
 &\leq L d_i(\tilde x_1, \tilde y_1) + L|\tilde x - \tilde x_1| + L|\tilde y - \tilde y_1|\\
&= L\tilde d_{G_i}(\tilde x, \tilde y).
\end{align*}
Alternatively, if we have that
\[
 d_{G_i}(\tilde x, \tilde y) = d_X(\tilde x_1, \tilde y_1) + |\tilde x - \tilde x_1| + |\tilde y - \tilde y_1|,
\]
then we see that
\begin{align*}
\tilde d_{G_i}(\tilde x, \tilde y) &\leq  d_i(\tilde x_1, \tilde y_1) + |\tilde x - \tilde x_1| + |\tilde y - \tilde y_1| \\
 &\leq L d_X(\tilde x_1, \tilde y_1) + L|\tilde x - \tilde x_1| + L|\tilde y - \tilde y_1|\\
&= Ld_{G_i}(\tilde x, \tilde y).
\end{align*}
The cases where either $\tilde x$ or $\tilde y$ are vertices are subcases of the above. Indeed, if $\tilde x$ is a vertex, then we may call it $\tilde x_1$ and the term $|\tilde x - \tilde x_1|$ is zero, and the above still holds. Thus, we have bi-Lipschitz equivalence of $d_{G_i}$ and $\tilde d_{G_i}$, with the same bi-Lipschitz constant of $L$. This implies, by the discussion above, that the family of $(G_i, d_{G_i}, \overline{m}_i)$ supports a $(1,p)$-Poincar\'e inequality with uniform data that depends only upon the doubling constant of $m_i$, $L$, and the data from $(G_i, \tilde d_{G_i}, \overline{m}_i)$.

\subsection{Pointed measured Gromov Hausdorff convergence of $(G_i, d_{G_i}, \overline{m}_i)$}
This section will be dedicated to proving:
\begin{lem} \label{Conv}
{ A subsequence of} $(G_i, d_{G_i}, \overline m_i)$ converges in the pointed measured Gromov Hausdorff sense to $(X, d_X, {\overline \mu})$, where ${\overline \mu}$ is comparable to $\mu$.
\end{lem}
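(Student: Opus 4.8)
The plan is to exhibit an explicit approximating family of maps realizing the pointed Gromov--Hausdorff convergence $(G_i, d_{G_i}, q_i) \overset{GH}{\to} (X, d_X, q)$, and then to upgrade this to \emph{measured} convergence by a weak-star compactness argument for the pushforward measures. Fix a base vertex $v_0 \in V_0$; by the nesting property $n_i|_{V_0} = n_0$ we may set $q := n_0(v_0) \in X$ and take $q_i := v_0 \in V_i \subset G_i$, so that the base points are consistent across all $i$ and map exactly to $q$.

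First I would define the natural projection-then-embed map $f_i : G_i \to X$ by $f_i(\tilde z) := n_i(\tilde v_{\tilde z})$, where $\tilde v_{\tilde z}$ is a nearest vertex of $V_i$ to $\tilde z$ (so $f_i = n_i$ on vertices). Three facts then give pointed GH convergence. On vertices the map is an exact isometry, since by the construction of $d_{G_i}$ we have $d_{G_i}(\tilde x, \tilde y) = d_X(n_i(\tilde x), n_i(\tilde y))$. As every point of $G_i$ lies within $\epsilon_{V_i}/2$ of a vertex, the triangle inequality gives $|d_X(f_i(\tilde z), f_i(\tilde w)) - d_{G_i}(\tilde z, \tilde w)| \leq \epsilon_{V_i}$, so condition (2) of the GH definition holds once $\epsilon_{V_i} < \eta$. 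Finally the density condition follows from hypothesis (1): given $x \in B(q, r - \eta)$, hypothesis (1) produces a vertex $\tilde x$ with $d_X(x, n_i(\tilde x)) \leq H_i$, and since $d_{G_i}(q_i, \tilde x) = d_X(q, n_i(\tilde x)) < r$ for $i$ large, this vertex lies in $B(q_i, r)$ while $f_i(\tilde x)$ is within $H_i < \eta$ of $x$. Because $\epsilon_{V_i}, H_i \to 0$, this yields $(G_i, d_{G_i}, q_i) \overset{GH}{\to} (X, d_X, q)$. Note that the GH definition allows the $f_i$ to be discontinuous, so the nearest-vertex projection is admissible.

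Next I would realize all balls in a common $\ell^\infty$. Fr\'echet embeddings provide isometric $\iota$ of $X$ and $\iota_i$ of $G_i$, and using the maps $f_i$ together with the convergence just established, one arranges (this is standard; see \cite{BB} and \cite{HKST}, Chapter 11) that $d_H^{\ell^\infty}(\iota_i(\overline B(q_i, r)), \iota(\overline B(q, r))) \to 0$ for every $r > 0$. To produce the limit measure I would invoke weak-star compactness: combining hypotheses (2) and (3), the uniform doubling from Lemma \ref{mgdoub}, and the comparability of $\overline m_i$ and $m_i$ at scales above $\epsilon_{V_i}$, the masses $\overline m_i(B(q_i, R))$ are comparable to $\mu(B_X(q, R))$ and hence uniformly bounded on each fixed ball. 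Banach--Alaoglu then yields a \emph{subsequence} along which $(\iota_i)_\# \overline m_i$ converges weak-star to a finite Borel measure $\nu$ on bounded subsets of $\ell^\infty$; since $\iota_i(G_i) \to \iota(X)$ in the Hausdorff sense, $\nu$ is carried by $\iota(X)$ and defines a measure $\overline\mu$ on $X$ via $\iota_\# \overline\mu = \nu$.

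The main obstacle is the final identification of $\overline\mu$ and its comparability to $\mu$. To establish $\overline\mu \approx \mu$ I would evaluate the weak-star limit on balls: for fixed $B_X(x,r)$ one uses lower-semicontinuity on open sets and upper-semicontinuity on compact sets for weak-star limits, choosing radii that are continuity sets for $\overline\mu$, to pass the two-sided bound $K^{-1}\mu(B_X(x,r)) \leq \overline m_i(B(x,r)) \leq K\mu(B_X(x,r))$ of hypothesis (3) to the limit and conclude that $\overline\mu(B_X(x,r))$ is comparable to $\mu(B_X(x,r))$ with a constant independent of the ball; comparability on all balls then upgrades to comparability of the measures since both are Borel regular and doubling. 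The delicate points are controlling the boundary effects of balls under weak-star limits (handled by perturbing radii) and checking that the $\epsilon_{V_i}$-mismatch between $f_i$-images and true vertex positions does not affect the limiting mass comparison; both are quantitatively harmless because $\epsilon_{V_i} \to 0$. With $\overline\mu$ comparable to $\mu$ in hand, Lemma \ref{Conv} follows, and Theorems \ref{Cheeg1} and \ref{GHconv} will subsequently transfer doubling and the $(1,p)$-Poincar\'e inequality from the family $(G_i, d_{G_i}, \overline m_i)$ to $(X, d_X, \overline\mu)$.
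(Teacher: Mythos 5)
Your proposal is correct and follows essentially the same route as the paper's proof: the same nearest-vertex projection maps $f_i$ (with base point fixed by the nesting property), the same verification of the three pointed Gromov--Hausdorff conditions using $\epsilon_{V_i}, H_i \to 0$, the same Fr\'echet embeddings into $\ell^\infty$, and the same weak-star compactness argument extracting a convergent subsequence of the pushforward measures. If anything, you are more careful than the paper at two points the paper glosses over --- the density condition (which you verify via $H_i$-density of the vertex images rather than the paper's slightly sloppy containment chain) and the passage of the two-sided comparability bound of hypothesis (3) to the weak-star limit (which you handle with semicontinuity and continuity-set radii, where the paper simply asserts comparability ``by construction'').
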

As per the discussion in the first paragraph of the previous subsection, the proof of this lemma will finish the proof of Theorem \ref{VtoX}.

\begin{proof}[Proof of Lemma \ref{Conv}]
Let $q$ be a point in $V_1$, and $n_1(q) \in X$ will be called $q$ by an abuse of notation. Because $(V_i, d_{v_i}, m_i)$ is a nested embedding into $X$, then there is a representative $q \in V_i$ for all $i \in \N$. We begin by showing that $(G_i, d_{G_i}, q) \overset{GH}{\to} (X, d_X, q)$. Let $r> 0$ and $0 < \eta < r$ be fixed numbers. For each $G_i$ we introduce the maps $f_i : G_i \to X$ where $\left.f_i\right|_{V_i} = n_i$, and $f_i$ maps points on the edge set to a closest vertex. That is,
\[
f_i(\tilde x) = x_1\in V_i \subset X,
\]
where $|\tilde x - \tilde x_1| \leq \frac{\epsilon_{V_i}}{2}$.
 Whenever $\tilde x$ is not the midpoint of an edge set, then $f_i$ is clearly well defined. {For a point $\tilde x \in G_i$ on an edge with associated vertices $\tilde x_1, \tilde x_2 \in V_i$ such that $d_{G_i} (\tilde x, \tilde x_1) = d_{G_i}(\tilde x, \tilde x_2)$, $f_i$ may be chosen to take $\tilde x$ to either vertex.} It is clear that $f_i$ are independent of $r$ and $\eta$, and may be used for any choice of these numbers. Furthermore, the first requirement of pointed Gromov Hausdorff convergence is trivially satisfied with these maps.

We notice that since we assume that $V_{i+1} \subset V_i$, and $H_i \to 0$, then $\epsilon_{V_i} \to 0$ as $i \to \infty$. Select $i_0$ large enough so that $\epsilon_{V_i} <\frac{\eta}{2L}$ for all $i \geq i_0$. Let $\tilde x, \tilde y \in B_{G_i}(q, r)$ for some $i \geq i_0$. If $\tilde x$ and $\tilde y$ are both vertices, then $d_X(x,y) = d_{G_i}(\tilde x,\tilde y)$, and the second requirement of pointed Gromov Hausdorff convergence is guaranteed trivially. However, if $\tilde x$ and $\tilde y$ are not vertices, then we still see that
\[
|d_X(f_i(\tilde x), f_i(\tilde y)) - d_{G_i}(\tilde x, \tilde y)| < 2L\epsilon_{V_i} < \eta.
\]
For the third requirement, we need to verify that $B_X(q, r - \eta) \subset N_\eta(f_i(B_{G_i}(q, r)))$. This is easily verified since
\[
B_X(q, r - \eta) \subset f_i(B_{G_i}(q, r - \eta + \epsilon_{V_i})) \subset f_i(B_{G_i}(q, r)).
\]
Thus, we see that
\[
(G_i, d_{G_i}, q) \overset{GH}{\to} (X, d_X, q).
\]

Since both $X$ and $G_i$ are separable, then there exists isometric embeddings of each into $\ell ^\infty$, since the vertices of $G_i$ are an embedded subset of $X$, we can require embeddings that are equal when restricted to $V_i \subset X$ and $V_i \subset G_i$. Then we see that
\[
d_H^{\ell^\infty}(\iota(\overline{B}_X(q,r)), \iota_i(\overline{B}_{G_i}(q,r))) < H_i + \epsilon_{V_i},
\]
where $\iota$ and $\iota_i$ are the embeddings of $X$ and $G_i$ into $\ell^\infty$ respectively. We see that $H_i + \epsilon_i$ goes to $0$ as $i \to \infty$. Thus, to verify the pointed measured Gromov Hausdorff convergence of $(G_i, d_{G_i}, \overline{m}_i)$, we only need to verify that $(\iota_i)_\#\overline m_i\lfloor\overline{B}_{G_i}(q_i,r)$ converges in the weak$^*$ sense to a measure that is comparable  on metric balls to $(\iota)_\#\mu\lfloor\overline{B}_X(q,r)$. An application of the Banach-Steinhaus theorem and the Reisz representation theorem guarantee that a subsequence does indeed weak$^*$ converge to some measure $\overline \mu$. Thus, we have that
\[
(G_i, d_{G_i}, \overline{m}_i) \overset{GH}{\to} (X, d_X, \overline{\mu}).
\]
\end{proof}
By the Theorem \ref{Cheeg1}, we know that $(X, d_X, \overline{\mu})$ has $\overline{\mu}$ as a doubling measure with a constant that ultimately depends on $L$ and the uniform doubling constant of the family of $m_i$. We also have, by Theorem \ref{GHconv}, that $(X, d_X, \overline{\mu})$ supports a $(1,p)$-Poincar\'e inequality with data that only depends on $L$, the uniform doubling constant of the family of $m_i$, and the uniform data of the family $(V_i, d_i, m_i)$. Since for all $i \geq i_0$ we have that $\overline m_i$ is comparable to $\mu$, then $\overline{\mu}$ is also comparable to $\mu$ by construction.

We now assert that $(X, d_X, \mu)$ supports a $(1,p)$-Poincar\'e inequality. This can be seen since changing (\ref{pi}) by a comparable measure only gives a different constant $C$ which depends upon the comparability constant of the two measures. In this case, the comparability constant depends upon $L$ and the doubling constant for $m_i$. Hence, $(X, d_X, \mu)$ supports a $(1,p)$-Poincar\'e inequality with data that depends on $L$, the uniform doubling constant of the family of $m_i$, and the uniform {Poincar\'e inequality} data of the family $(V_i, d_i, m_i)$.

\end{document}